\newtheorem{theorem}{Theorem}
\newtheorem{lemma}[theorem]{Lemma}
\newtheorem{corollary}[theorem]{Corollary}
\newtheorem{proposition}[theorem]{Proposition}
\newtheorem{definition}[theorem]{Definition}
\newtheorem{example}[theorem]{Example}
\newtheorem{Remark}[theorem]{Remark}
\numberwithin{theorem}{section}
\numberwithin{equation}{section}
\DeclareMathOperator \Holder {\hbox{\rm H\"older}}
\DeclareMathOperator \PSH {{\rm PSH}}
\DeclareMathOperator \MA {{\rm MA}}
\DeclareMathOperator \MAH {{\rm MAH}}
\DeclareMathOperator \Vol {{\rm Vol}}
\DeclareMathOperator \Id {{\rm Id}}
\DeclareMathOperator \Capa {{\rm Cap}}
\DeclareMathOperator \exph {{\rm exph}}
\DeclareMathOperator \diam {{\rm diam}}
\def\C{\mathbb{C}}
\def\R{\mathbb{R}}
\def\connection{D}
\def\om{\omega}
\def\e{\varepsilon}
\def\f{\varphi}
\def\dc{dd^c}
\def\p{\psi}
\def\d{\delta}
\begin{document}
\title{H\"older continuous solutions to Monge-Amp\`ere equations}
\subjclass[2000]{32U05, 32U40, 53C55}
\keywords{Monge-Amp\`ere operator, K\"ahler manifold, pluripotential theory,
H\"older continuity}
\author[J.-P.\ Demailly, S.\ Dinew, V.\ Guedj,  H.\ H.\ Pham,
 S.\ Ko\l odziej, A. Zeriahi]{Jean-Pierre Demailly, S\l awomir Dinew, Vincent Guedj, Pham Hoang Hiep, S\l awomir Ko\l odziej and Ahmed Zeriahi}

\date{\today}

\begin{abstract}
Let $(X,\omega)$ be a compact K\"ahler manifold. We obtain uniform H\"older regularity for solutions to the complex Monge-Amp\`ere equation on $X$ with $L^p$ right hand side, \hbox{$p>1$}. The same regularity is furthermore proved on the ample locus in any big cohomology class. We also study the range $\MAH(X,\omega)$ of the complex Monge-Amp\`ere operator acting on $\omega$-plurisubharmonic H\"older continuous functions. We show that this set is convex, by sharpening Ko\l odziej's result that measures with $L^p$-density belong to $\MAH(X,\omega)$ and proving that $\MAH(X,\omega)$ has the ``$L^p$-property'', $p>1$. We also describe accurately the symmetric measures it contains.

\end{abstract}

\maketitle

\section{introduction}
Let $(X,\omega)$ be a compact $n$-dimensional K\"ahler manifold. Let also  $\Theta = \{\theta\} \in H^ {1,1} (X,\R)$ be a given cohomology class on $X$. In the note we consider two different cases of interest:
\begin{enumerate}
\item $\Theta$ is a K\"ahler class, i.e. there exists a K\"ahler form which represents $\Theta$. In this case we assume without loss of generality that $\om\in\Theta$;
\item $\Theta$ is a {\it big} cohomology class, which means that there exists a (possibly singular) closed $(1,1)$ current $T$ representing $\Theta$  such that $T$ is \emph{strictly positive} i.e. $T\geq \e_0 \omega$ for some constant $\e_0>0$.
\end{enumerate}

The study of complex Monge-Amp\`ere equations on compact K\"ahler
manifolds with a K\"ahler background metric has a long history and
many spectacular results have appeared during the years. The big
cohomology class setting, on the other hand, was initiated
recently in \cite{BEGZ}. This is  the
most general setting where a meaningful (and nontrivial) theory
can be developed. Of course it covers the K\"ahler class setting
  as a particular case, but since the latter is
more classical and certain technicalities can be avoided we have
decided to treat both cases separately.

\smallskip

We deal with the K\"ahler setting first.
We study the range
of the (normalized) complex Monge-Amp\`ere operator
$$
\MA(u):=\frac{1}{V_{\omega}}(\omega+dd^c u)^n,
\; \; V_{\omega}:=Vol_{\omega}(X)=\int_X \omega^n,
$$
acting on $\omega$-plurisubharmonic ($\omega$-psh for short) H\"older-continuous functions $u$.
Here, as usual $d=\partial+\overline{\partial}$ and $d^c:=\frac{1}{2 i \pi}
(\partial-\overline{\partial})$, and $V_{\omega}$ denotes the volume of the cohomology class
$\{\omega\}$, so that $MA(u)$ is a probability measure.

This problem is motivated by the study of canonical metrics on mildly singular varieties: their
potentials
are solutions to degenerate complex Monge-Amp\`ere equations for which H\"older continuity is
the best global regularity one can expect. Furthermore even such weak regularity does imply
estimates for the ``metric'' $\omega+dd^c u$ which might be relevant for the study of the limiting
behavior of the K\"ahler-Ricci flow.
We refer the reader to \cite{ST1,ST2,ST3,SW,EGZ1,GKZ,K2,KT,BCHM,BEGZ,To,TZ,SW} for further
geometrical motivations and references.

We let $\PSH(X,\omega)$ denote the set of $\omega$-psh functions: these are defined as being locally equal to the sum of a plurisubharmonic and a smooth function and any such function $u$ additionally satisfies the inequality $\omega+dd^c u \geq 0$ in the weak sense of currents.

We let $\Holder(X,{\mathbb R})$ denote the set of real valued H\"older-continuous functions on $X$. Our goal is
thus  to understand the range
 $$
 \MAH(X,\omega):=\MA( \PSH(X,\omega) \cap \Holder(X,{\mathbb R})).
 $$

 A result of  fifth named author \rm \cite{K2} (see \cite{EGZ1} and \cite{Di1} for refinements in particular
 cases) asserts that a
 probability measure $\mu=f \omega^n$ which is absolutely continuous with respect to the
 Lebesgue measure belongs to $\MAH(X,\omega)$ if it has density $f \in L^p$
 for some $p>1$. Note that a Monge-Amp\`ere potential $u \in PSH(X,\omega)$
such that $MA(u)=\mu$ is unique, up to an additive constant.

The proof in [K3]   does not give any
 information on the H\"older exponent of the Monge-Amp\`ere potential.
We combine here the methods of \cite{K2} and the regularization techniques of
the first named author \cite{De82,De94} to establish the
 following result:

 \medskip

 \noindent {\bf Theorem A.}
 {\it
 Let $\mu=f \omega^n=MA(u)$ be a probability measure absolutely continuous with respect to Lebesgue
 measure,
 with density $f \in L^p$, $p>1$. Then $u$
 is H\"older-continuous with exponent $\alpha$ arbitrarily close to $2/(1+nq)$, where
 $q$ denotes the conjugate exponent of $p$.
 }

  \medskip

  It is a slightly better exponent than the one obtained in some special cases  in
  \cite{EGZ1} and \cite{Di1}. It is
 also asymptotically optimal (see \cite{Pl} and \cite{GKZ} for some local counterexamples which
 are easily adjustable to the
 compact setting). The proof
 uses a  subtle regularization result of \cite{De82,De94}, as in \cite{Di1} and \cite{BD}. The
extra  tool that allows us to remove symmetry/curvature constraints is the
 Kiselman minimum principle coupled with Demailly's method of attenuating singularities (the
 Kiselman-Legendre transform)
from \cite{De94}.

\smallskip

By keeping track of the H\"older constant together with the exponent one can in fact obtain
uniform estimates provided suitable control on the global geometry is assumed. More precisely if
we assume uniformly bounded geometry (this notion will be explained in the Preliminaries) the
following holds:

\medskip

\noindent{\bf Theorem A*.}
{\it Let $(X_s,\omega_s)$ be a family of compact K\"ahler manifolds with uniformly bounded geometry.
Consider the Monge-Amp\`ere equations $$(\omega_s+dd^c u_s)^n=f_s\omega_s^n,\ \ \sup_{X_s} u_s=0.$$
 If $\Vert f\Vert_{L^p(\omega_s^n)}\leq C$ are uniformly bounded then the solutions $u_s$ are uniformly
 H\"older continuous for any
 exponent $\alpha<2/(nq+1)$ and the H\"older constant is uniformly controlled by  $C$ and the
 constants from the definition of the
 uniformly bounded  geometry.
}
\medskip

We furthermore believe that additional technical improvements of our arguments may lead to
analogous statements in the case of classes which are merely semi-positive and {\it big}
(see \cite{BGZ} for a definition and further developments).

\smallskip

A satisfactory description of $\MAH(X,\omega)$ is yet to be found.
We nevertheless establish a technically involved
characterization (Theorem \ref{thm:char}) that
allows us to derive several useful consequences, for example we show:

   \medskip

 \noindent {\bf Theorem B.}
 {\it
 The set $\MAH(X,\omega)$ has the $L^p$-property: if $\mu \in \MAH(X,\omega)$
 and $0 \leq f \in L^p(\mu)$ with $p>1$ and $\int_X f d\mu=1$, then
 $f \mu \in \MAH(X,\omega)$.

 In particular the set $\MAH(X,\omega)$ is convex.
 }

  \medskip

  It has been recently proved by Dinh-Nguyen-Sibony \cite{DNS} (see also \cite{Ber} for recent
  developments) that
  measures in $\MAH(X,\omega)$ have the following strong integrability pro\-perty:
  if $\mu \in \MAH(X,\omega)$, then
  $$
  (\dag)
  \hskip1cm
  \exp( -\varepsilon \PSH(X,\omega)) \subset L^1(\mu),
  \text{ for some } \varepsilon>0.
  $$
  This is a useful generalization of Skoda's celebrated
  integrability theorem (see \cite{Sk,Ze}).

  It is natural to wonder whether condition $(\dag)$ characterizes $\MAH(X,\omega)$.
  This is the case when  $n=1$ (see \cite{DS} and Subsection \ref{onedim}). In this note we
 show that such a characterization still holds in higher dimensions provided the measures under consideration have symmetries:

   \medskip

 \noindent {\bf Theorem C.}
 {\it
 Let $\mu$ be a probability measure with finitely many isolated singularities of radial or toric
 type.
 Then $\mu$ belongs to $\MAH(X,\omega)$ if and only if $(\dag)$ is satisfied.
 }

  \medskip

Next we turn our attention to the general big cohomology class setting. To this end we choose a smooth $(1,1)$-form $\theta$ representing $\Theta$. Observe that in general one cannot have $\theta\geq 0$. Analogously to the K\"ahler setting we can nevertheless define   $\PSH(X,\theta)$ as the set of functions which are defined again as being locally equal to sum of a plurisubharmonic and a smooth function and any such function $\varphi$ should satisfy
  $\theta+dd^c \varphi \geq 0$. Observe that by assumption such functions exist, although they all may be singular.

It follows from the  regularization \rm theorem of the first
author \cite{De94}  that we can find a strictly positive closed
$(1,1)$ current $T_+=\theta+dd^c \f_+$ which represents $\Theta$
and has \emph{analytic singularities}, that is there exists $c>0$
such that locally on $X$ we have
$$
\f_+=c\log\sum_{j=1}^N|f_j|^2\text{ mod }C^\infty
$$
where $f_1,...,f_N$ are local holomorphic functions. Such a current $T_+$ is then smooth on a Zariski open subset $\Omega$, and the \emph{ample locus}  $\text{Amp}(\Theta)$ of $\Theta$ is defined as the largest such Zariski open subset (which exists by the Noetherian property of closed analytic subsets). Therefore any $\theta$-psh function $\psi$ with {\it minimal singularities} is locally bounded on the ample locus. Here having minimal singularity means that given any other $\theta$-psh function $\varphi$ one has the inequality
$$\varphi\leq\psi+O(1).$$

According to \cite{BEGZ} we can then define the (non-pluripolar) product
$\langle(\theta+\dc \f)^n\rangle$, and in case $\f$ has minimal singularities,
the total mass of this measure is independent of $\f$ and equals
$$
\int_X\langle(\theta+\dc \f)^n\rangle=:Vol(\Theta)>0.
$$
It is therefore meaningful to study  the (normalized) Monge-Amp\` ere equation
 $$
MA(\f):=\frac{1}{Vol(\Theta)}  (\theta + dd^c \f)^n = \mu,
 $$
for a given probability measure $\mu$ vanishing on pluripolar sets.

When $\mu=f dV$ is absolutely continuous with respect
 to Lebesgue measure with density  $f \in L^p (X)$, $p>1$, there is a unique solution modulo additive constant which turns out to have minimal singularities \cite{BEGZ}.
The solution is known to be globally continuous  on $X$ when the cohomology class $\Theta$ is moreover semi-positive (\cite{EGZ11}).

\smallskip

In this context we prove the following analogue of  Theorem A:

\medskip

\noindent {\bf Theorem D.}
 {\it  Let $\mu$ be a probability measure absolutely continuous with respect to a fixed smooth volume form, with density $f \in L^p (X)$, $p>1$. Let $\f$ be a weak solution of the Monge-Amp\`ere equation
$MA(\f)=\mu$. Then for any $0 < \alpha < 2/(1+nq)$, $\f$ is H\"older-continuous of exponent $\alpha$ locally in the ample locus $Amp({\Theta})$ of $\Theta$ (here
 $q$ denotes the conjugate exponent of $p$). }
 \medskip

  The note is organized as follows. In Section \ref{preliminaries} we recall all the basic facts
  and introduce the necessary
 definitions.
  Theorems A and A* are proved in  Section \ref{proof}. After recalling the one dimensional theory in
 Subsection \ref{onedim}, we establish the characterization of
  $\MAH(X,\omega)$ in Subsection \ref{characterization}. This allows us to prove Theorem B (in  Subsection \ref{thmB}) and derive further interesting consequences. The case of measures with symmetries is handled in
   Section \ref{symmetries}. Theorem $D$ is proved in Section \ref{big}. In the Appendix we briefly explain
 how bounds on the curvature allow to control the differential of the exponential mapping, a technical information needed in the proof of Theorem A*.

\section{preliminaries}\label{preliminaries}
\subsection{Curvature and regularization}
 Let $X$ be a compact K\"ahler manifold equipped with a
 fundamental K\"ahler \rm form $\omega $ given in local coordinates by
$$
\omega =\frac{i}{2}\sum_{k,j}g_{k\bar j } dz^k \wedge d\bar z ^j .
 $$
 Its
bisectional curvature tensor in those local coordinates is defined by
$$
R_{i\bar j k\bar l}:=-\frac{\partial^2g_{k\bar l}}{\partial z_i\partial \bar {z}_j}+\sum_{p,q=1}^ng^{p\bar{q}}
\frac{\partial g_{p\bar l}}{\partial \bar{z}_j}\frac{\partial g_{k\bar q}}{\partial z_i},
$$
with $g^{p\bar{q}}$ denoting the inverse transposed matrix of $g_{p\bar{q}}$ i.e.,
$\sum_{q=1}^ng^{p\bar{q}}g_{s\bar{q}}=\delta_{ps}$.
It is a classical fact that in the K\"ahler case the bisectional curvature tensor coincides with the Levi-Civita curvature tensor.
We say that the bisectional curvature is bounded by $A>0$ if for any $z\in X$ and any vectors $Z,
Y\in T_{z}X,\ Z, Y\neq 0$ we have the inequality
$$
\Big|\sum_{i,j,k,l}^nR_{i\bar j k\bar l}(z)Z_i\bar{Z}_jY_k\bar{Y}_l\Big|\leq A\Vert Z\Vert_{\omega}^2\Vert Y\Vert_{\omega}^2.
$$
Analogously the bisectional curvature is bounded from below (resp.\ from above) by $A$ if
$$
\sum_{i,j,k,l}^nR_{i\bar j k\bar l}(z)Z_i\bar{Z}_jY_k\bar{Y}_l\geq A\Vert Z\Vert_{\omega}^2\Vert Y\Vert_{\omega}^2,\qquad\hbox{(resp.\ ${}\leq{}$)}
$$
respectively. It is easy to check that the existence of such bounds is independent of the
choice of local coordinates.

\medskip

Recall that if $u$ is a psh function in a domain in ${\mathbb C}^n$
then a convolution with a radial smoothing kernel preserves positivity
of $ dd^c u $. For non-flat metrics, this may not be the case any
longer.  However, an approximation technique due to the first author allows to
control ``the negative part'' of the smooth form. It is described in
detail in \cite{De82} and \cite{De94}. Here we shall briefly highlight its main features.

Consider the exponential mapping from the tangent space of a given point $z\in X$
$$
\exp_z: T_zX\ni\zeta\mapsto \exp_z(\zeta)\in X,
$$
which is defined by $\exp_z(\zeta)=\gamma(1)$ with $\gamma$ being the geodesic starting from $z$
with initial velocity $\gamma'(0)=\zeta$. Given any function $u\in L^1(X)$,
we define its $\delta$-regularization $\rho_\delta u$\ to be
\begin{equation}\label{phie}
\rho_\delta u(z)=\frac{1}{\delta ^{2n}}\int_{\zeta\in T_{z}X}
u(\exp_z(\zeta))\rho\Big(\frac{|\zeta|^2_{\omega }}{\delta ^2}\Big)\,dV_{\omega}(\zeta),\ \delta>0
\end{equation}
according to \cite{De82}. Here $\rho$ is a smoothing kernel, $|\zeta|^2_{\omega }$\ stands for $\sum_{i,j=1}^ng_{i\bar{j}}(z)\zeta_i\bar{\zeta}_j$, and
$dV_{\omega}(\zeta)$\ is the induced measure $\frac1{2^nn!}(dd^c|\zeta|^2_{\omega } )^n$.
This may be formally  extended as a function on $X\times{\mathbb C}$ by putting  $U(z,w):=\rho_\delta u(z)$  for $w\in \mathbb C,\
|w|=\delta$.
The introduction of the variable $w$ is convenient  for an application of Kiselman minimum
principle \cite{Ki1,Ki2} to that function. It should be noticed that in
\cite{De94} the riemannian exponential map ``$\exp$'' has been replaced by
a ``holomorphic counterpart'' $\exph$, which is defined as the holomorphic
part of the Taylor expansion of \hbox{$\zeta\mapsto\exp_z(\zeta)$} (the
reason is that the calculations then become somewhat simpler, especially
in the non K\"ahler case, but this is not technically necessary; 
thanks to a well known theorem of E.~Borel, such a formal expansion can always
be achieved by a smooth function~$\exph:TX\to X$). The function $\exph$ is
however not  uniquely defined, and this weakens the intrinsic character of the estimates. Therefore, we stick here to the more
usual riemannian $\exp$ function. The estimates obtained in
\cite{De82} show that all results
of \cite{De94} and \cite{BD} are still valid with the unmodified definition
of $\rho_\delta u$, at least when $(X,\omega)$ is K\"ahler.
By Lemma 8.2 of \cite{De82}, the exponential function
$$
\exp:TX\rightarrow X,\quad
TX\ni(z,\zeta)\mapsto \exp_z(\zeta)\in X,\ \zeta\in T_{z}X
$$
satisfies the following properties:
\begin{enumerate}
\item $\exp$ is a $\mathcal C^{\infty}$\ smooth mapping;
\item $\forall z\in X,\ \exp_z(0)=z\ {\rm and}\ d_{\zeta}\exp(0)=\Id_{T_{z}X}$;
\item $\forall z\in X$\ the map $\zeta\rightarrow \exp_z{\zeta}$\ has a third order Taylor expansion at $\zeta=0$ of the form
\begin{equation}\label{taylor expansion}
\left|\, \exp_z (\zeta )_m-z_m -\zeta _m - \frac{1}{2}\sum _{j,k,l} R_{j\bar k l\bar m}\big(\bar{z}_k+{\textstyle\frac{1}{3}}\bar{\zeta}_k\big) \zeta _j \zeta _l \, \right| 
\leq C(|\zeta |^2 (|z|+|\zeta |)^2 ),\ \ |\zeta |<r,
\end{equation}
for small enough $r>0$. The expansion is valid in holomorphic normal coordinates
with respect to the K\"ahler metric.
\end{enumerate}
It is convenient to select a particular smoothing kernel, namely
 $\rho: \mathbb R_{+}\rightarrow\mathbb R_{+}$ by setting
$$\rho(t)=\begin{cases}\frac {\eta}{(1-t)^2}\exp(\frac 1{t-1})&\ {\rm if}\ 0\leq t\leq 1,\\0&\
{\rm if}\ t>1\end{cases}$$
 with a suitable constant $\eta$, such that
\begin{equation}\label{total integral}
\int_{\mathbb C^n}\rho(\Vert z\Vert^2)\,dV(z)=1
\end{equation}
($dV$ denotes the Lebesgue measure in $\mathbb C^n$).

The crucial estimate of the Hessian of $U(z,w)$ given in
\cite{De94}, Proposition 3.8 (see also \cite{De82}, Proposition 8.5),
coupled with Kiselman's theorem provide a lemma stated in this form
 in \cite[Lemma 1.12]{BD}:

\begin{lemma}
 Fix any bounded $\omega$-psh function $u$ on a compact K\"ahler manifold $(X, \omega )$. Let $U(z,w)$ be
 its regularization as defined above. Define the Kiselman-Legendre transform with level $c$ by
\begin{equation}\label{kisleg}
u_{c,\delta}:=\inf _{0\leq t\leq \delta }\Big[U(z,t)+Kt^2-K\delta^2-c\log\Big(\frac t{\delta}\Big)\Big].
\end{equation}

Then for  some positive constant $K$  depending on the curvature, the function $U(z,t)+Kt^2$ is increasing in $t$ and
one has the following estimate for the complex Hessian:
\begin{equation}\label{hessest}
\omega+dd^c u_{c,\delta}\geq -(A\min\{ c,\lambda(z,\delta)\}+K\delta^2)\,\omega,
\end{equation}
where $A$ is a lower bound of the negative part of the bisectional curvature of $\omega$,  while
$$
\lambda(z, t):=\frac{\partial}{\partial\log t}(U(z,t)+Kt^2).
$$
\end{lemma}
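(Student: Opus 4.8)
The plan is to recognize this statement as a repackaging of Demailly's regularization estimate \cite[Proposition 3.8]{De94} (see also \cite[Proposition 8.5]{De82}) together with Kiselman's minimum principle, as in \cite[Lemma 1.12]{BD}, and to reconstruct it in three steps: a joint complex Hessian bound for the regularized potential $U(z,w)$ on $X\times{\mathbb C}$ together with the monotonicity of $t\mapsto U(z,t)+Kt^2$; the passage to the Legendre-type infimum \eqref{kisleg} via Kiselman's principle; and a short endpoint analysis producing the term $\min\{c,\lambda(z,\delta)\}$.

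\textbf{Step 1.} Working in holomorphic normal coordinates and inserting the third order expansion \eqref{taylor expansion} of $\exp_z$ into the defining integral \eqref{phie} (after the substitution $\zeta=|w|\,\eta$), one differentiates $U(z,w)=\rho_{|w|}u(z)$ twice in the joint variable $(z,w)$. The leading term is the genuine flat convolution of a plurisubharmonic function, hence plurisubharmonic; the terms carrying the curvature tensor $R_{j\bar k l\bar m}$, after integration against the radial kernel $\rho$ and an integration by parts, produce a negative part bounded by $A$ times the mean of $dd^cu$ at scale $|w|$, which is exactly $\lambda(z,|w|)=\frac{\partial}{\partial\log t}(U(z,t)+Kt^2)\big|_{t=|w|}$; and the remaining $O(|\zeta|^2(|z|+|\zeta|)^2)$ errors are absorbed into $K|w|^2\,\omega$ for $K$ large in terms of the curvature bounds. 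This gives, on $X\times{\mathbb C}$,
\[
\omega+dd^c_{(z,w)}\big(U(z,w)+K|w|^2\big)\ \geq\ -\big(A\,\lambda(z,|w|)+K|w|^2\big)\,\omega .
\]
Two consequences: the right-hand side being the pullback of a form on $X$, it has no component in the $w$-direction, so $w\mapsto U(z,w)+K|w|^2$ is subharmonic, and being radial it is convex in $\log|w|$, whence $\lambda(z,t)$ is nondecreasing in $t$; and since mollifications of $\omega$-psh functions decrease to the function as the scale tends to $0$ (up to the $Kt^2$ correction), $t\mapsto U(z,t)+Kt^2$ is nondecreasing, so $\lambda\geq 0$. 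These are the monotonicity statements of \cite[Proposition 8.5]{De82}.

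\textbf{Steps 2 and 3.} Put $w=\delta e^{\sigma}$, so that $\operatorname{Re}\sigma=\log(t/\delta)\leq 0$; on $\{0<|w|\leq\delta\}$ the term $-c\log|w|$ is pluriharmonic, so by Step 1 the function $U(z,\delta e^{\sigma})+K\delta^2|e^{\sigma}|^2-c\operatorname{Re}\sigma$ is plurisubharmonic in $(z,\sigma)$ up to the curvature defect above and depends on $\sigma$ only through $\operatorname{Re}\sigma$. Since $u$ is bounded, for $c>0$ the bracket in \eqref{kisleg} tends to $+\infty$ as $t\to 0^{+}$ and equals $\rho_\delta u(z)$ at $t=\delta$, so the infimum defining $u_{c,\delta}$ is attained at some $t^{*}(z)\in(0,\delta]$, and Kiselman's minimum principle \cite{Ki1,Ki2}, in the form carrying the curvature defect, shows that $u_{c,\delta}$ inherits the lower Hessian bound with the defect evaluated at $t^{*}(z)$, namely $A\,\lambda(z,t^{*})+K(t^{*})^{2}$. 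It remains to bound this by $A\min\{c,\lambda(z,\delta)\}+K\delta^2$: viewing the bracket as a convex function of $s=\log(t/\delta)\leq 0$ with derivative $\lambda(z,\delta e^{s})-c$ (nondecreasing by Step 1), either $\lambda(z,\delta)\leq c$, the derivative is $\leq 0$ throughout, $t^{*}=\delta$, and the defect is $A\lambda(z,\delta)+K\delta^2=A\min\{c,\lambda(z,\delta)\}+K\delta^2$; or $\lambda(z,\delta)>c$, the infimum is at an interior critical point, $\lambda(z,t^{*})=c$ and $t^{*}\leq\delta$, so the defect is $Ac+K(t^{*})^2\leq Ac+K\delta^2=A\min\{c,\lambda(z,\delta)\}+K\delta^2$. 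In both cases this is \eqref{hessest}.

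The main obstacle is Step 1: showing that, after integrating the curvature terms in \eqref{taylor expansion} against the radial kernel, the negative part of the complex Hessian of $U(z,w)+K|w|^2$ is controlled \emph{exactly} by $A\,\lambda(z,|w|)$ and not merely by a constant multiple of $A$. This is the curvature computation of \cite[Proposition 3.8]{De94} (and \cite[Proposition 8.5]{De82}); once it is granted, the Kiselman step and the endpoint analysis are essentially formal.
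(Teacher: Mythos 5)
Your proposal follows essentially the same route as the paper, which gives no independent proof of this lemma but quotes it as \cite[Lemma 1.12]{BD}, i.e.\ as the combination of Demailly's Hessian estimate for the regularization $U(z,w)$ (\cite[Proposition 3.8]{De94}, \cite[Proposition 8.5]{De82}) with Kiselman's minimum principle; you grant exactly that estimate in Step 1 and then carry out the Kiselman/Legendre endpoint analysis, which is the intended argument. The reconstruction is correct, with the monotonicity of $\lambda(z,t)$ in $t$ rightly used to ensure that the curvature defect on the whole range of $t$ over which the infimum is effectively taken (not merely at the minimizing $t^{*}$) is bounded by $A\min\{c,\lambda(z,\delta)\}+K\delta^{2}$.
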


\subsection{Jensen formula and uniformly bounded geometry}

The classical Jensen formula (see, for example \cite{BT1})
for a $\mathcal C^2$ function $u$ defined in a ball $B(z,2\delta )$ in ${\mathbb C}^n$ says that
\begin{equation}\label{jensen}
  (\check{u}_{\delta } - u) (z )
=\frac{2n}{\delta ^{2n } \sigma _{ 2n-1} }
\int _0 ^{\delta } r^{2n-1} \int _0 ^r t^{1-2n} \int _ {|\zeta
|\leq t } \Delta u(z+\zeta )\,dV(\zeta ) \, dt\, dr,
\end{equation}
where $\check{u}_{\delta }$ is the average of $u$ over $B(z, \delta )$ and
 $ \sigma _{2n-1}$ denotes the total surface measure of the unit sphere.
 Now, if $u$ is defined in a large set, then the integration of the above
 formula in $z$ provides an estimate of the integral of
 $\delta ^{-2}(\check{u}_{\delta } - u)$ in terms of the integral of the Laplacian of $u$. We need such
 an
estimate on
 compact K\"ahler manifolds which is uniform as long as the geometry
 of manifolds is bounded in a certain sense.

\begin{definition}\label{unif}
Consider a family $(X_s, \omega _s )$ of  compact K\"ahler manifolds.
We shall say that it has \it uniformly bounded geometry  if
\begin{itemize}
\item[1)] the diameter $\diam(X_s,\omega_s)$ is uniformly bounded,
\item[2)] their bisectional curvatures are uniformly bounded,
\item[3)] the  injectivity radius is uniformly bounded from below.
\end{itemize}
\end{definition}

By well-known estimates \cite{HK}, it then follows that the total volumes $\Vol_{\omega_s}(X_s):=\int_{X_s}\omega_s^n$ are uniformly bounded above and below by constants $C$ and $C^{-1}$ independent of $s$.

It turns out that such bounds are enough to ensure various interesting geometric and analytic bounds. Note in particular that they imply lower bounds on the Tian $\alpha$ invariants for the classes of $\omega_s$-psh functions  which does not depend on $s$ (see \cite{BEGZ}).
In potential applications $X$ will usually stay fixed, while the K\"ahler forms may vary. Note that all conditions are obviously satisfied if the forms $\omega_s$ are bounded in $\mathcal C^{\infty}$ topology and uniformly positive; this can be achieved by selecting appropriate representatives when the cohomology classes $[\omega_s]$ are given and contained in a fixed relatively compact region of the K\"ahler cone of $X$. Thus an interesting case to treat would be when the classes $[\omega_s]$ approach the boundary of this cone. Unfortunately this may in general lead to a blow-up of the curvature and for this reason our argument cannot be applied to study the limiting behavior. On the other hand the method works if the forms $\omega_s$ approximate a $\mathcal C^{1,1}$ form $\omega$ in a fixed cohomology class provided that the curvatures of $\omega_s$ stay bounded.

We can now state a lemma to be used in the next section.

\begin{lemma}\label{Jen}
 Assume  $(X_s,\omega_s)$ is a family of compact K\"ahler manifolds with uniformly bounded geometry.
Let $u_s$ be continuous $\omega_s$-psh functions
 normalized by  $\min_{X_s}u_s =1,\ \max_{X_s}u_s\leq B$ for some fixed constant $B$. If
 $\rho_{\delta} u_s$ is the regularization of
 $u_s$ defined as in (\ref{phie}) then for $\delta$ small enough we have
$$
\int_{X_s} \frac{\rho_{\delta} u_s-u_s}{\delta ^2 }\omega_s ^n \leq C_0 ,
$$
where $C_0$ only depends on $B$ and the constants involved in the uniform bounds on the geometry.
\end{lemma}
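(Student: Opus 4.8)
The plan is to reduce the global estimate on $X_s$ to a sum of local estimates in coordinate charts of controlled size, apply the classical Jensen formula \eqref{jensen} in each chart, integrate in the base point, and use the uniform geometry hypotheses to make every constant that appears independent of $s$. First I would fix, for each $s$, a finite covering of $X_s$ by geodesic balls $B(p_i,r)$ of a common radius $r>0$ smaller than the (uniform) lower bound on the injectivity radius; the bound on the diameter together with the injectivity radius bound and the volume bounds from \cite{HK} guarantee that the number $N$ of such balls needed can be taken independent of $s$. On each such ball the exponential map $\exp_{p_i}$ is a diffeomorphism onto its image, and pulling back $\omega_s$ gives a K\"ahler form on a fixed Euclidean ball whose coefficients, together with those of the metric and its inverse, are uniformly comparable to the standard ones and have uniformly bounded derivatives — this is exactly where the curvature and injectivity radius bounds enter, and it lets us pass between $dV_{\omega_s}$ and Lebesgue measure, between $|\zeta|_{\omega_s}$ and $|\zeta|$, and between the regularization $\rho_\delta u_s$ defined via $\exp$ and the honest Euclidean average $\check{(u_s)}_\delta$ at the cost of universal constants.

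The analytic heart is then the following: for $\delta$ small (smaller than some fraction of $r$, uniformly in $s$) and for a $\mathcal C^2$ function $u$ one has from \eqref{jensen}, after integrating in $z$ over a slightly shrunk ball and using Fubini in the triple integral, the bound
\begin{equation*}
\int \frac{\check{u}_\delta - u}{\delta^2}\,dV \leq C \int \Delta u \, dV
\end{equation*}
with $C$ depending only on $n$; the point is that $\delta^{-2}(\check u_\delta-u)\geq 0$ for $u$ subharmonic, and the iterated radial integral in \eqref{jensen} contributes only a dimensional constant once one integrates the innermost average of $\Delta u$ against the (nonnegative) kernel and bounds it by the full integral of $\Delta u$ over the slightly larger ball. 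Transplanting this to $X_s$ via the charts, the left side controls $\int_{X_s}\delta^{-2}(\rho_\delta u_s - u_s)\,\omega_s^n$ up to a uniform multiplicative constant and a uniform additive error (coming from the discrepancy between $\rho_\delta$ and the Euclidean average, which by the Taylor expansion \eqref{taylor expansion} is $O(\delta^2)$ times controlled quantities — hence $O(1)$ after division by $\delta^2$), while the right side is a uniform constant times $\sum_i \int_{B(p_i,2r)} \Delta u_s \, dV$. Since $\Delta u_s \, dV$ is comparable (again with uniform constants, from the metric bounds) to the trace of $\omega_s + dd^c u_s$ with respect to $\omega_s$, i.e.\ to a measure dominated by $n\,\omega_s^n + \mathrm{tr}_{\omega_s}(dd^c u_s)\,\omega_s^n$, its integral over $X_s$ is controlled: $\int_{X_s}\mathrm{tr}_{\omega_s}(dd^c u_s)\,\omega_s^n = n\int_{X_s} dd^c u_s \wedge \omega_s^{n-1} = 0$ by Stokes, so $\int_{X_s}\Delta u_s\,dV_{\omega_s} \leq C\,\Vol_{\omega_s}(X_s)\leq C'$, uniformly. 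One subtlety: $u_s$ is only continuous, not $\mathcal C^2$; this is handled by first proving the inequality for the smooth approximants $\rho_\epsilon u_s$ (which are $\mathcal C^\infty$ and still satisfy a uniform two-sided bound once $\epsilon$ is small, by Lemma 1.12 applied with $c$ large, or more simply by the maximum principle for the average), then letting $\epsilon\to 0$ and using that $\rho_\delta$ and the sup/min normalizations pass to the limit.

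The main obstacle I expect is the bookkeeping of the \emph{uniformity} of all the constants — specifically, producing a covering of $X_s$ by a controlled number of charts of controlled size on which the pulled-back metrics are uniformly $\mathcal C^1$-comparable to the Euclidean one. This is a standard but not entirely trivial consequence of bounds on curvature, injectivity radius and diameter (it is essentially the content of the cited estimates of \cite{HK} together with comparison-geometry estimates for the exponential map, whose first-order behavior is quantified in the Appendix), and once it is in hand the rest is the elementary manipulation of \eqref{jensen} plus Stokes' theorem. The additive error term, rather than a purely multiplicative bound by $\int\Delta u_s$, is unavoidable and harmless: it is precisely why the conclusion is $\leq C_0$ and not $\leq C_0\int_{X_s}\Delta u_s\,dV$, and it absorbs the difference between the Riemannian regularization and the flat one.
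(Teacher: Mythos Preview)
Your outline has a genuine gap at the step where you compare the Riemannian regularization $\rho_\delta u$ to a Euclidean average in a fixed normal-coordinate chart centered at some $p_i$. You claim this discrepancy is $O(\delta^2)$ via the Taylor expansion \eqref{taylor expansion}, but in a chart of fixed radius $r$ the metric coefficients at a point $z$ differ from the Euclidean ones by $O(|z|^2)=O(r^2)$, and $\exp_z(\zeta)$ differs from $z+\zeta$ by terms of order $|\zeta|^2(|z|+|\zeta|)=O(\delta^2 r)$. A change of variables then shows that the two smoothing kernels differ in $L^1$-norm by $O(r^2)$, not $O(\delta^2)$; since $u$ is merely bounded (not $\mathcal C^1$), this only yields $|\rho_\delta u(z)-\hat u_\delta(z)|\leq C B\,r^2$. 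After dividing by $\delta^2$ and integrating over $X$ you get a term of order $r^2/\delta^2$, which blows up as $\delta\to 0$. Shrinking the charts to radius comparable to $\delta$ does not help either: the number of charts is then $\sim\delta^{-2n}$ and the local mass bounds on $\Delta u$ scale like $\delta^{2n-2}$, so the sum is again $O(\delta^{-2})$.

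The paper avoids this difficulty by a genuinely different device: it writes $\int_X(\rho_\delta u-u)\,dV_\omega$ as a double integral over $X\times X$ with kernel $K_\delta(z,x)$, then \emph{symmetrizes} under $(z,x)\mapsto(x,z)$ to obtain $\int_{X\times X} u(x)\bigl(K_\delta(z,x)\wedge dV_\omega(z)-K_\delta(x,z)\wedge dV_\omega(x)\bigr)$. The point is that the antisymmetric part of this kernel is $O(\delta^{2-2n})$, as one sees by expanding in normal coordinates at the \emph{midpoint} of the geodesic joining $z$ and $x$; there both $|z|$ and $|x|$ are $O(\delta)$, so the curvature corrections really are $O(\delta^2)$ and, crucially, the leading term $\bigwedge_j(dx_j-dz_j)\wedge(d\bar x_j-d\bar z_j)$ is symmetric in $z$ and $x$ and cancels. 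This symmetrization is what produces the needed $\delta^2$ gain; your chart-by-chart Jensen argument does not contain an analogue of it.
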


\begin{proof} Let us fix $s$ and omit it in the notation for simplicity. By definition
\begin{eqnarray*}
\rho_\delta u(z)&=&\int_{\zeta\in T_zX}u(\exp_z\zeta)
\rho\Big(\frac{|\zeta|^2_{\omega}}{\delta^2}\Big)
\frac{dV_{\omega}(\zeta)}{\delta^{2n}}
=\int_{x\in X} u(x) \rho\Big(\frac{|\log_z x|^2_{\omega}}{\delta ^2}\Big)\,
\frac{dV_{\omega}(\log_z x)}{\delta^{2n}}\\
&=& \int_{x\in X}u(x) K_{\delta}(z,x)
\end{eqnarray*}
where $x\mapsto \zeta=\log_z x$ is the inverse of $\zeta\mapsto x=\exp_z(\zeta)$. The map $(z,x)\mapsto (z,\log_z x)$ defines a diffeomorphism from a neighborhood of the diagonal in  $X\times X$ onto a neighborhood of the zero section of $TX$ by the implicit function theorem. Here 
$$
K_\delta(z,x)=
\frac{1}{\delta^{2n}}\rho\Big(\frac{|\log_z x|^2_{\omega}}{\delta ^2}\Big)\,
dV_{\omega}(\log_z x)
$$
is the semipositive $(n,n)$ form on $X\times X$ defined as the pull-back of $\rho(|\zeta|^2_{\omega}/\delta^2)\,dV_{\omega}(\zeta)/\delta^{2n}$ by $(z,x)\mapsto\zeta=\log_z x$; it can be viewed as a kernel with compact support in a neighborhood of the diagonal of~$X\times X$. By definition,
we have $\int_{x\in X} K_{\delta}(z,x) = 1$ (as is clear by taking $u\equiv 1$), thus
$$
u(z)=\int_{x\in X}u(z)K_{\delta}(z,x).
$$
Therefore
\begin{eqnarray*}
\int_X \big(\rho_{\delta}u(z)-u(z)\big)dV_\omega(z)&=&
\int_{(x,z)\in X\times X}\big(u(x)-u(z)\big)K_\delta(z,x)\wedge dV_\omega(z)\\
&=&\int_{(x,z)\in X\times X}u(x)\big(K_\delta(z,x)\wedge dV_\omega(z)-
K_\delta(x,z)\wedge dV_\omega(x)\big)
\end{eqnarray*}
thanks to a change of variable $(z,x)\mapsto (x,z)$.
In order to finish the proof we need the following lemma which establishes a pointwise bound for the kernel:

\begin{lemma} 
If $d_{\omega}(z,x)\leq \delta$, then 
$$
|\left(K_\delta(z,x)\wedge dV_\omega(z)-
K_\delta(x,z)\wedge dV_\omega(x)\right)|\leq C\delta^{2-2n}dV_{\omega}(z)\wedge dV_{\omega}(x), 
$$
for some uniform constant $C$ which only depends on the curvature of $\om$. 
If   $d_{\omega}(z,x)> \delta$, then $K_\delta(z,x)\wedge dV_\omega(z)=
K_\delta(x,z)\wedge dV_\omega(x)=0$.
\end{lemma}

\begin{proof} 
Given the symmetry of $|\log_z(x)|_\omega=|\log_x(z)|_\omega=d_\omega(z,x)$, it is enough to bound the $(2n,2n)$-form $dV_{\omega}(\log_z x)\wedge dV_\omega(z)- dV_{\omega}(\log_x z)\wedge dV_\omega(x)$. 
The last assertion follows  from the fact that $ \rho\Big(\frac{|\log_z x|^2_{\omega}}{\delta ^2}\Big)= \rho\Big(\frac{|\log_x z|^2_{\omega}}{\delta ^2}\Big)=0$ if $d_{\omega}(z,x)> \delta$.

We now establish the first part of the lemma. Set $\zeta=\log_zx$ (i.e.\
$x=\exp_z(\zeta)$) and $y=\exp_z(\frac{\zeta}{2})=\exp_z(\frac{1}{2}\log_z(x))$ (the mid-point of the geodesic joining $z$ and $x$). Observe that from the expansion (\ref{taylor expansion}) applied at $y$ (which is identified with zero in this system of normal coordinates) we have 
\begin{equation}\label{logequation}
\zeta_m=\log_z(x)_m=x_m-z_m-\frac12\sum_{j,k,l}R_{j\bar{k}l\bar{m}}\Big(\bar{z}_k+\frac13(\overline{ x}_k-\overline{z}_k)\Big)(x_j-z_j)(x_l-z_l)+O(\Vert z-x\Vert^4).
\end{equation}
Now (\ref{logequation}) yields
$$
d\zeta_m=d(\log_z x)_m=dx_m-dz_m+O(\Vert z-x\Vert^2)(dx,dz),
$$
with an $O(...)$ term depending only on the curvature. By the choice of the center $y$ we have\break $z_j=\frac 12(z_j-x_j)+O(\Vert z-x\Vert^2)$, where the $O(...)$ term again only depends on the curvature. Thus the expansion
$$
dV_\omega(\zeta)=\frac{\omega(z)^n}{n!}(\zeta)=\Big(1-\sum _{j,k,l}R_{j\bar{k}l\bar{l}}z_j\bar{z}_k+O(\Vert z\Vert^3)\Big)
\frac{i}{2}d\zeta_1\wedge d\overline\zeta_1\wedge\ldots\wedge
\frac{i}{2}d\zeta_n\wedge d\overline\zeta_n
$$
at any given point $z$ yields
$$
dV_{\omega}(\log_z x)=\bigwedge_{j=1}^n\frac{i}{2}(dx_j-dz_j)\wedge (d\overline{x}_j-d\overline{z}_j)+O(\Vert z-x\Vert^2).
$$
Thus, by taking the product with $dV_\omega(z)$, exchanging $x$ and $z$, and then subtracting and dividing by $\delta^{2n}$, we obtain the desired bound
$$
\frac{dV_{\omega}(\log_z x)\wedge dV_{\omega}(z)-dV_{\omega}(\log_x z)\wedge dV_{\omega}(x)}{\delta^{2n}}=
\frac{O(\Vert z-x\Vert^2)}{\delta^{2n}}\,dV_{\omega}(z)\wedge dV_{\omega}(x).
$$ 
The appendix implies that $O(...)$ depends only on global bounds for the 
geometry.
\end{proof}

We can now use Fubini's theorem and the  estimates on the kernel to obtain
\begin{eqnarray*}
\lefteqn{\!\!\!\!\! \!\!\!\!\! \!\!\!\!\!\!\!\!\!
\int_{(x,z)\in X\times X}u(x)\big(K_\delta(z,x)\wedge dV_\omega(z)-
K_\delta(x,z)\wedge dV_\omega(x)\big) } \\
&=&\int_{x\in X}\int_{z\in B(x,\delta)}u(x)\big(K_\delta(z,x)\wedge dV_\omega(z)-
K_\delta(x,z)\wedge dV_\omega(x)\big)\\
&\leq& \int_{x\in X}\int_{z\in B(x,\delta)}|u(x)|C\delta^{2-2n}dV_{\omega}(z)\wedge dV_{\omega}(x) \\
&\leq&  \int_{x\in X}BC\delta^2 dV_{\omega}(x)\leq C_0\delta^2,
\end{eqnarray*}
as claimed. 
\end{proof}

\subsection{The $\mathcal H(\alpha)$ condition and measures uniformly dominated by capacity}

A fundamental tool in the study of $\omega$-psh functions  is the relative
capacity modelled on the Bedford-Taylor relative capacity (\cite{BT2}).

\begin{definition} \label{relcap}
Let $(X,\omega)$ be a compact K\"ahler manifold.
Given a Borel subset $K$ of~$X$, we define
its relative capacity with respect to $\omega$ by
$$
\Capa_{\omega}(K):=\sup\Big\{\int_K(\omega+dd^c \rho)^n|\ \rho\in \PSH(X,\omega),\ 0\leqslant\rho\leqslant 1\Big\}.
$$
\end{definition}

The following classes have been considered in \cite{EGZ1}:

\begin{definition}
 Let $\mu$ be a probability measure on a compact K\"ahler manifold $(X, \omega)$.
We say that $\mu$ belongs to the class ${\bf {\mathcal H}(\alpha)}$, $\alpha>0$ $($alternatively, that
$\mu$ statisfies the $\mathcal H(\alpha)$ property$)$, if
there exists $C_{\alpha}>0$ such that for any compact $K\subset X$,
 $$
 \mu(K)  \leq C_{\alpha} \Capa_{\omega}(K)^{1+\alpha}
 $$
  If this holds for any $\alpha>0$, we say that $\mu$ satisfies ${\bf  {\mathcal H}(\infty)}$.
\end{definition}

 It was proved in \cite{K1,K2} that measures of the type $\mu=f\omega^n$ with a density $f$ in $L^p$ for some $p>1$ do
 satisfy ${\mathcal H}(\infty)$ (see also \cite{Ze}).
A slightly stronger notion was introduced in \cite{DZ}:

\begin{definition}
 We say that a probability measure $\mu$ is {\rm dominated by capacity} for $L^p$
functions if there exists constants $
\alpha>0$ and $\beta >0$, such that
for any compact $K\subset X$ and
non-negative $f\in L^p(\mu)$
with $p>1$, one has for some
constant $C$ independent of $K$
that
$$\mu(K)\leqslant C\cdot \Capa_{\omega}
(K)^{1+\alpha}\ {\rm and}\ \int_K f\mu\leqslant
C\cdot \Capa_{\omega}(K)^{1+\beta }.$$
\end{definition}

Both notions are variations on the condition (A) introduced by 
fith named author  in \cite{K1}. These conditions, which are
actually stronger than condition (A), ensure the existence of
bounded solutions $u$ to
$$
MA(u)=f\mu,
$$
as long as $\int_X f d\mu=1$.

Note that the condition $\mathcal H(\infty)$ is equivalent to domination by capacity for $L^{\infty}$ functions by a simple application of the H\"older inequality.

\subsection{Big cohomology classes}
 Let $X$ be a compact K\"ahler manifold of dimension $n$,  and $\Theta = \{\theta\} \in H^ {1,1} (X,\C) \cap H^ 2 (X,\R)$ a big cohomology class with a smooth representative $\theta$.

We introduce the extremal function $V_\theta$ defined by
\begin{equation}\label{equ:extrem}
V_\theta(x):=\sup\{\f(x) \, |\,  \f\in PSH(X,\theta),\sup_X\f\le 0\},
\end{equation}
where $PSH(X,\theta)$ is the set of all $\theta$-plurisubharmonic functions on $X$.
The function $V_\theta$ is a $\theta$-psh function with \emph{minimal singularities}.

Similarly to the K\"ahler case we define the relative capacity:

\begin{definition} \label{thetacap}
Let $X$ be a compact K\"ahler manifold. Given a Borel subset $K$ of $X$, we define
its relative capacity with respect to $\theta$ by
$$
\Capa_{\theta}(K):=\sup\Big\{\int_K(\theta+dd^c \rho)^n \, | \, \rho\in \PSH(X,\theta),\ V_\theta(x)-1\leqslant\rho\leqslant V_\theta(x)\Big\}.
$$
\end{definition}

Observe that contrary to the K\"ahler case competitors to maximize the right hand side have minimal singularities but are in general {\it unbounded}. The Monge-Amp\`ere measures in the definition
are only considered outside the polar locus $\{x \in X \, | \, V_{\theta}(x)=-\infty\}$. Observe that
the latter depends on the cohomology class $\{ \theta\}$ but not on the choice of its representative $\theta$.

\smallskip

Most definitions from the K\"ahler setting have their {\it big} counterparts, we refer the readers to \cite{BEGZ} for details and more background regarding big cohomology classes. In particular we can apply the same convolution procedure to any $\theta$-psh function, as well as the Kiselman-Legendre transform.

\smallskip

In order to prove Theorem D we shall need a stability estimate proved in \cite{GZ11}:

\begin{proposition} \label{proEGZ}
Assume that $\mu$ is a probability measure absolutely continous with respect to a smooth volume form
$d V$, $ d \mu = f d V$, where $f \in L^p(X)$ with $p > 1$.
Let $\f,\p$ be  $\theta$-plurisubharmonic functions such that
$MA(\f)= \mu$,  $- M_0 + V_{\theta} \leq \varphi\leq V_{\theta}$ and $\psi\leq V_\theta$ on $X$, for some positive constant $M_0 >0$.
 Then for any exponent $0 < \gamma < \frac{1}{n q + 1}$, there exists a constant $B_0 = B_0(p,\gamma,M_0) > 0$ such that
 $$
  \sup_X (\psi - \varphi)_+ \leq B_0 \Vert (\psi - \varphi)_+ \Vert_{L^1 (X)}^{\gamma}.
 $$
\end{proposition}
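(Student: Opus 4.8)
The plan is to adapt Ko\l odziej's capacity comparison scheme to the big class, following \cite{BEGZ}, while keeping an auxiliary parameter $t\in(0,1)$ free and optimising it only at the very end in order to reach the exponent $1/(nq+1)$. If $M:=\sup_X(\psi-\varphi)\le 0$ there is nothing to prove, so assume $M>0$; note that $\psi\le V_\theta\le\varphi+M_0$ forces $M\le M_0$, and by enlarging $B_0$ we may assume $\varepsilon_0:=\Vert(\psi-\varphi)_+\Vert_{L^1(X)}$ is as small as we wish. For $t\in(0,1)$ and $s>0$ set, on the ample locus $\mathrm{Amp}(\Theta)$ where $V_\theta$ is bounded,
$$
U(s,t):=\{\varphi<(1-t)\psi+tV_\theta-s\}.
$$
From $(1-t)\psi+tV_\theta-\varphi=(1-t)(\psi-\varphi)+t(V_\theta-\varphi)$ and $\varphi\le V_\theta$ one sees that $U(s,t)\neq\emptyset$ for $s<(1-t)M$, while on $U(s,t)$ one has $(1-t)(\psi-\varphi)_+>s-tM_0$; hence by Chebyshev's inequality $\Vol\big(U(s,t)\big)\le (1-t)\varepsilon_0/(s-tM_0)$ whenever $s>tM_0$.

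Next I would establish the two basic comparison estimates. Writing $w:=(1-t)\psi+tV_\theta-s$, which is $\theta$-psh with $\theta+dd^cw\ge t(\theta+dd^cV_\theta)$, the standard capacity lower bound — glue a competitor $\rho$ for $\Capa_\theta(K)$ of a compact $K\subset U(s+\delta,t)$ with $w$ via $\tilde w:=\max(\varphi,\,w+\delta(\rho-V_\theta))$, which has minimal singularities and satisfies $\theta+dd^c\tilde w\ge\delta(\theta+dd^c\rho)$ near $K$ when $0<\delta\le t$, and then apply the comparison principle of \cite{BEGZ} together with $\MA(\varphi)=\mu$ and inner regularity of $\Capa_\theta$ — yields
$$
\delta^n\,\Capa_\theta\big(U(s+\delta,t)\big)\le \Vol(\Theta)\,\mu\big(U(s,t)\big),\qquad 0<\delta\le t.
$$
On the other hand, since $\mu=f\,dV$ with $f\in L^p$, $p>1$, the measure $\mu$ satisfies the big analogue of $\mathcal H(\infty)$ (\cite{K1,K2,BEGZ}): for every $\alpha>0$ there is $C_\alpha$ with $\mu(K)\le C_\alpha\Capa_\theta(K)^{1+\alpha}$. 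Combining the two estimates and setting $f_t(s):=\Capa_\theta\big(U(s,t)\big)^{1/n}$, one gets the recursion
$$
\delta\, f_t(s+\delta)\le B_\alpha\, f_t(s)^{1+\alpha},\qquad 0<\delta\le t,\quad B_\alpha:=(\Vol(\Theta)\,C_\alpha)^{1/n}.
$$

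Then I would invoke Ko\l odziej's iteration lemma (see \cite{K1,EGZ1}): if $2B_\alpha f_t(S_0)^\alpha\le t$, then $f_t\equiv 0$ on $[S_0+\ell(t),\infty)$ with $\ell(t)\le 3B_\alpha f_t(S_0)^\alpha/(1-2^{-\alpha})$. Since $U(S_0+\ell(t),t)$ is then pluripolar and open in $\mathrm{Amp}(\Theta)$, it is empty, so $\varphi\ge(1-t)\psi+tV_\theta-S_0-\ell(t)$ on $X$, and using $\psi\le V_\theta$ this gives $M\le S_0+\ell(t)$. It remains to choose $S_0$ and $t$: taking $\delta=t$ in the capacity lower bound gives $\Capa_\theta(U(S_0,t))\le t^{-n}\Vol(\Theta)\,\mu(U(S_0-t,t))$; bounding $\mu$ by H\"older's inequality and the Chebyshev estimate above, and choosing $S_0:=(2+M_0)t$ (so that $S_0-t-tM_0=t$), leads to $f_t(S_0)\le C\,\varepsilon_0^{1/(nq)}\,t^{-1-1/(nq)}$, whence
$$
M\le (2+M_0)\,t+C'\,\varepsilon_0^{\alpha/(nq)}\,t^{-\alpha(1+1/(nq))}.
$$
Minimising the right-hand side over $t\in(0,1)$ — the optimal $t$ being a small power of $\varepsilon_0$, for which the side condition $2B_\alpha f_t(S_0)^\alpha\le t$ is met up to a harmless constant adjustment of $S_0$ — produces $M\le C''\,\varepsilon_0^{\,\alpha/(\alpha(nq+1)+nq)}$. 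Since $\alpha>0$ is arbitrary and $\alpha/(\alpha(nq+1)+nq)\to 1/(nq+1)$ as $\alpha\to+\infty$, given $\gamma<1/(nq+1)$ one picks $\alpha$ large enough to conclude $M\le B_0\varepsilon_0^\gamma$.

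The delicate point is precisely this final bookkeeping. Keeping $t$ fixed leaves an irreducible term proportional to $t$ that does not vanish with $\varepsilon_0$, so $t$ must be driven to $0$ as a power of $\varepsilon_0$; and a fixed value of the domination exponent $\alpha$ only yields a worse H\"older exponent. It is the combined use of the free parameter $t$ — which simultaneously defines the sublevel sets $U(s,t)$, constrains the increments $\delta\le t$ in the recursion, and fixes the starting level $S_0\asymp t$ — together with the freedom to let $\alpha\to\infty$ in the $\mathcal H(\infty)$ property of $L^p$ densities, that recovers the sharp range $\gamma<1/(nq+1)$.
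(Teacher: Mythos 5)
The paper itself gives no proof of Proposition \ref{proEGZ}: it is imported from the reference [GZ2] (= \cite{GZ11}), so there is no in-text argument to compare with. Your reconstruction is, in substance, the standard Ko\l odziej--Eyssidieux--Guedj--Zeriahi stability argument that the cited reference uses, and it is correct: the Chebyshev bound on $\Vol(U(s,t))$, the gluing $\max(\varphi, w+\delta(\rho-V_\theta))$ with $\delta\le t$ together with the BEGZ comparison principle (legitimate here because $\varphi$ has minimal singularities, hence full Monge--Amp\`ere mass), the $\mathcal H(\infty)$-type domination $\mu(K)\le C_\alpha \Capa_\theta(K)^{1+\alpha}$ valid for $L^p$ densities in the big setting, the De Giorgi-type iteration with increments $\le t$, and the final optimisation $t\sim \varepsilon_0^{x}$ with $\alpha\to\infty$ do deliver every $\gamma<1/(nq+1)$; your fix of the side condition $2B_\alpha f_t(S_0)^\alpha\le t$ by taking $S_0=Kt$ with $K$ large (depending only on $\alpha,p,M_0$) works. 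One step is stated loosely: $U(S_0+\ell,t)=\{\varphi<(1-t)\psi+tV_\theta-S_0-\ell\}$ is not open (the right-hand side is only usc), so ``pluripolar and open, hence empty'' should be replaced by: zero capacity implies zero Lebesgue measure, hence $\varphi\ge(1-t)\psi+tV_\theta-S_0-\ell$ almost everywhere, hence everywhere since both sides are $\theta$-psh; the conclusion $M\le S_0+\ell(t)$ then follows as you say from $\psi\le V_\theta$. Also recall $U(s,t)=\emptyset$ for $s\ge M_0$ (since $\varphi\ge V_\theta-M_0$), which supplies the vanishing of $f_t$ at infinity needed to start the iteration.
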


\section{Proof of Theorems A and A*}\label{proof}

\begin{proof}[Proof of Theorem A]
Fix $u \in PSH(X,\omega)$ such that $MA(u)=\mu$.
 Denote by $A-1=A'>0$ a bound for the curvature of $(X, \omega)$.
 By \cite{K1} $u$ is continuous, so assume that $\min _X  u =1$ and denote by $B := \max_X u$
 the maximum of $u$.
 Consider $\rho_{\delta }u$- the regularization of the $\omega$-psh function $u$ defined  in (\ref{phie}).

 Let  us set for $\delta > 0$ and $\alpha > 0$,
 \begin{equation}\label{2new}
 E(\delta,\alpha) := \{(\rho_{\delta }u-u)(z) >  \delta ^{\alpha}\}
 \end{equation}
 Let  $0 < \alpha _1 <\frac{2}{qn +1}$.  Choose  $\varepsilon >0, \ \alpha ,\ \alpha_0$ such that
 $$
 \alpha_1 < \alpha < \alpha_0 < 2 - \alpha _0 q (n+\varepsilon).
 $$
 Set $\theta := e^{-3AB}$. Recall (Lemma 2.1) that there exists a constant $K$ which only
depends on the curvature such that the functions $\rho_{\delta }u+K\delta^2$ are increasing in $\delta$.
Note that for $\delta $ small enough
$ \theta^{\alpha_1}  \delta^{\alpha_1} \geq \delta^{\alpha _0}+K\delta^2(1-\theta^2)$.
Altogether this implies that
 $E(\delta,\alpha_0) \supset E (\theta \delta ,\alpha_1)$.

\smallskip

  We want to show that $E (\theta \delta ,\alpha_1)$ is empty.
   Recall the definition of the Kiselman-Legendre transform at level $\delta^{\alpha}$ (see Lemma 2.1)
 $$
 U_{\delta }= \inf _{ t\in [0,\delta ]}(\rho_{t }u + Kt^2 -\delta ^{\alpha }\log\frac{t}{\delta } -K\delta ^2 ),
 $$
 where $K$ is chosen as in the formula (\ref{kisleg}). It follows from \cite{De82} that the same $K$
 can be chosen for a family of manifolds with uniformly bounded geometry.
  In what follows $\delta_0$ and $\delta_j, c_j, \ j=1,2,3;$ denote constants which are uniform if the geometry is uniformly
 bounded
 and $\Vert f\Vert_p$
 stays bounded.

 By Lemma 2.1
 $$
 \omega +dd^c U_{\delta } \geq -[(A-1)\delta ^{\alpha }+K\delta ^2 ]\omega > -A\delta ^{\alpha } \omega +  2\delta^{\alpha_0}
 \omega
 $$
 for $0 < \delta  < \delta_0$, where $\delta_0 > 0$ is small enough. Therefore
 $$u_{\delta }:=\frac{1}{1+A\delta ^{\alpha }  } U_{\delta }$$
 is  $\omega$-psh on $X$ and satisfies
 $$
 \omega + dd^c u_{\delta} \geq \delta^{\alpha_0} \omega,
 $$
provided $A\delta^{\alpha}<1$, which we can safely assume.
 From Lemma \ref{Jen} we have
 \begin{equation}\label{used}
 \int _X \vert \rho_{\delta }u -u\vert \omega ^n \leq c_1 \delta ^2 , \ \
 \end{equation}
 for $0 < \delta < \delta_0$.
 Therefore for $E_0 = E (\delta,\alpha_0) = \{(\rho_{\delta }u-u)(z) > \delta ^{\alpha_0}\}$ we have
 $$
 \int _{E_0} \omega ^n \leq c_1 \delta ^{2-\alpha_0 },
 $$
 and, by H\"older inequality,
 $$
 \int _{E_0} f\omega ^n \leq c_2 \delta ^{(2-\alpha_0 )/q}.
 $$
 Let us modify $f$ setting $g=0$ on $E_0$ and  $g=cf$ elsewhere, with $c$ such that total integrals of $f$ and
 $g$ are equal.  Solve for continuous $\omega$-psh function $v$ (comp. \cite{K3})
 $$
 (\omega +dd^c v)^n = g\omega ^n ,\ \ \ \max (u-v)=\max (v-u).
 $$
 Observe that $\Vert f - g \Vert_{L^1 (X)} = 2 \int_{E_0} f \omega^n \leq 2 c_2 \delta ^{(2-\alpha _0 )/q}$. Then by  \cite{DZ}
 there
 exists $c_3 $ ($c_3$
 depends additionally on $\varepsilon>0)$ such that
 \begin{equation}\label{3}
 \Vert u-v\Vert_{L^{\infty} } \leq c_3 \delta ^{\frac{2-\alpha_0 }{q(n+\varepsilon ) } }.
 \end{equation}

  We claim that there exist small enough constants $\delta_1>\delta_2>\delta_3>0$ such that for any $0<\delta<\delta_3$ there is
 a set inclusion
 \begin{equation}
 \label{eq:FI}
 E (\theta \delta,\alpha_1) \subset  \{u_{\delta} - v > \delta^{\alpha} \}  \subset E (\delta,\alpha_0).
 \end{equation}

 Indeed, take $z$ in $E(\theta \delta, \alpha_1)$.
 By Lemma 2.1, the function $\rho_tu + K t^2$ is increasing in $t \in [0,\delta]$.
Thus for $t\in [\theta \delta , \delta ]$,
 $$
 \rho_tu (z) -u(z)=\rho_tu (z)-\rho_{\theta\delta}u+\rho_{\theta\delta}u
-u(z)\geq K(\theta\delta)^2-Kt^2+(\theta \delta)^{\alpha _1} \geq (\theta \delta )^{\alpha _1}- K\delta ^2,
 $$
 and for $t< \theta \delta $, since $\theta = e^{- 3 AB}$, we have
 $$
 -\delta ^{\alpha }\log (t/\delta ) \geq 3AB\delta ^{\alpha }.
 $$
 Therefore
 $$
(U_{\delta } - u)(z) \geq \min ((e^{-3AB}\delta )^{\alpha _1}- K\delta ^2 , 3AB\delta ^{\alpha }) = 3 AB\delta ^{\alpha }
 $$
 for  $0 < \delta <\delta _1$, where $\delta_1 > 0$ is small enough (we can safely assume that $\delta_1<\delta_0$). Hence, by
 (\ref{3})
 $$
(U_{\delta } -v)(z) \geq 3AB\delta ^{\alpha } - c_3 \delta ^{\frac{2-\alpha _0 }{q(n+\varepsilon ) }} >  2AB\delta ^{\alpha },
 $$
 for  $\delta <\delta _2$, where $0 < \delta_2 < \delta_1$ is small enough.
 Observe that
 $$
 U_{\delta }- u_{\delta }\leq AB \delta ^{\alpha }.
 $$
 Since $A B \geq 1$, it follows that $u_{\delta}(z) - v (z) > AB \delta^{\alpha} > \delta^\alpha$ for $\delta <\delta _2$, which
 proves the first
 inclusion
 $E(\theta \delta, \alpha_1) \subset \{ u_{\delta} - v > \delta^{\alpha}\}$ in (\ref{eq:FI}).

 To prove the second inclusion, take $z \notin E (\delta,\alpha_0)$. Since, under our assumptions
 $$
 u_{\delta } < U_{\delta } \leq \rho_{\delta }u,
 $$
 we get, applying (\ref{3})
$$
(u_{\delta }-v)(z) \leq (\rho_{\delta }u -u)(z)+c_3 \delta ^{\frac{2-\alpha _0 }{q(n+\varepsilon ) }}
 \leq \delta ^{\alpha _0} + c_3 \delta ^{\frac{2-\alpha _0 }{q(n+\varepsilon ) }}  < \delta ^{\alpha },
$$
for $0 < \delta <\delta_3,$ where $0 < \delta_3 < \delta_2$ is small enough. This proves our second inclusion
$$
\{ u_{\delta} - v > \delta^{\alpha}\} \subset  E (\delta,\alpha_0)
$$
for $0 < \delta <\delta_3$ and completes the proof of (\ref{eq:FI}).

  Now we want to apply the comparison principle do deduce from (\ref{eq:FI}) that the set $ E(\theta \delta,\alpha_1)$ is empty
 for $\delta > 0$ small enough.
Let us fix $0 < \delta < \delta_3$ and recall that $E_0 =  E (\delta,\alpha_0)$. From (\ref{eq:FI}) and the comparison principle \cite{K3}, if follows that
$$
\int _{ \{ u_{\delta}> v + \delta^{\alpha}\}} (dd^c u_{\delta} +\omega )^n \leq \int _{ \{ u_{\delta}> v + \delta^{\alpha}\}} (dd^c v +\omega )^n \leq
\int _{E_0} (dd^c v +\omega )^n =\int _{E_0} g\omega ^n =0.
$$
 Since $u_{\delta}$ is $\omega$-psh and $(\omega + dd^c u_{\delta})^n \geq \delta^{n \alpha_0} \omega^n$, it follows that the
 volume of
 the set $ \{ u_{\delta}> v + \delta^{\alpha}\}$ is zero. Hence it is empty, since $u_{\delta}$ and $v$ are $\omega-$psh
 functions.
 Therefore from (\ref{eq:FI}), it follows  that the set $ E(\theta \delta,\alpha_1)$ is also empty. Setting $\eta = \theta
 \delta = \delta
 e^{-3AB}$, we obtain
 $$
\rho_{\eta}u - u \leq e^{3 \alpha_1 A B} \ \eta^{\alpha_1},
$$
  for $ 0 < \eta < \eta_0 = e^{- 3 \alpha A B} \delta_3$.

Note that the above inequality  means that locally ["words
permutation"]  the $\eta$- convolution of $u$ is no more than
$u$ plus some constant of order $\eta^{\alpha_1}$. Thus repeating
the local argument from \cite{GKZ} one obtains that the supremum
of $u$ in a coordinate ball of radius $\eta$ and center $z$ is
also controlled by $u(z)$ and a constant of order
$\eta^{\alpha_1}$.
  This proves that $u$ is H\"older continuous with exponent $\alpha_1$.
\end{proof}

Note that in the proof above we could choose the same $\delta _1 , \delta _2$ and $\delta_3$ for uniform $\alpha _j , c_j$. Thus,
following the lines of this proof, one can obtain an analogous result for families of manifolds with uniformly bounded geometry.

\begin{theorem}[Theorem A*]\label{cor}
 Let $(X_s,\omega_s)$ be a family of $n$-dimensional compact K\"ahler manifolds with uniformly bounded geometry. Consider the Monge-Amp\`ere equations
 $$
(\omega_s+dd^c u_s)^n=f_s\omega_s^n,\qquad \sup_{X_s} u_s=0,
$$
where $\int_{X_s} f_s \omega_s^n=\int_{X_s} \omega_s^n.$

If $\Vert f\Vert_{L^p(\omega_s^n)}\leq C$ are uniformly bounded then the
 solutions $u_s$ are
 uniformly H\"older continuous for any exponent $\alpha<2/(nq+1)$ and the H\"older constant is uniformly controlled by  $C$ and
 the
 constants from the definition of the uniformly bounded  geometry.
\end{theorem}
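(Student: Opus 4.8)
The plan is to re-run the proof of Theorem A essentially verbatim for each member $(X_s,\omega_s,u_s,f_s)$ of the family, and to verify that every constant that appears depends only on $C$, $p$, the fixed exponents $\alpha_1<\alpha<\alpha_0$, the auxiliary $\varepsilon>0$, and the three quantities entering Definition \ref{unif}, but not on $s$. The first task is the normalization. The hypotheses of Definition \ref{unif} give (as recalled right after it) uniform two‑sided bounds on $\Vol_{\omega_s}(X_s)$ and a uniform lower bound on the Tian $\alpha$-invariants of the classes $\{\omega_s\}$ (see \cite{BEGZ}); feeding these into Ko\l odziej's a priori estimate \cite{K1,K2} yields $\operatorname{osc}_{X_s}u_s\le B$ with $B$ independent of $s$. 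Since the equation is invariant under adding constants, I may replace $u_s$ by $u_s-\min_{X_s}u_s+1$, so that $\min_{X_s}u_s=1$ and $\max_{X_s}u_s\le B$ uniformly, which is exactly the normalization used in the proof of Theorem A; recall also that $u_s$ is continuous by \cite{K1}.

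Next I would go through the proof of Theorem A line by line with $(X,\omega,u,f)=(X_s,\omega_s,u_s,f_s)$ and check uniformity of the constants in the order they are produced. The constant $K$ in the Kiselman–Legendre transform (Lemma 2.1) depends only on an upper bound for the bisectional curvature, hence is uniform by \cite{De82}; the constant $c_1$ in \eqref{used} is uniform by Lemma \ref{Jen}, whose proof uses only $B$ and the geometry bounds, the kernel estimate there being uniform thanks to the Appendix; the constant $c_2$ from H\"older's inequality is uniform because $\Vert f_s\Vert_{L^p(\omega_s^n)}\le C$; and the thresholds $\delta_0,\delta_1,\delta_2,\delta_3,\eta_0$ are fixed by finitely many explicit inequalities among the previous constants and the fixed exponents, so they can be chosen independently of $s$. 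The only input that requires a genuine argument is the stability estimate \eqref{3}, i.e.\ the constant $c_3$ coming from \cite{DZ}: it needs the measures $\omega_s^n$ to be \emph{uniformly} dominated by capacity for $L^p$ functions, with uniform constants $\alpha,\beta,C$. This follows from the uniform $\alpha$-invariant lower bound together with the uniform volume bounds, which produce uniform Skoda/Ko\l odziej-type estimates $\omega_s^n(K)\le C\,\Capa_{\omega_s}(K)^{1+\alpha}$ and the corresponding bound for $f\,\omega_s^n$ (cf.\ \cite{K1,Ze}); hence $c_3=c_3(C,p,\varepsilon,B,\text{geometry})$ is uniform. With all constants uniform, the comparison-principle step goes through as in Theorem A: $E(\theta\delta,\alpha_1)$ is empty, that is $\rho_\eta u_s-u_s\le e^{3\alpha_1 AB}\,\eta^{\alpha_1}$ for all $0<\eta<\eta_0$, with $\eta_0$ and the constant uniform.

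Finally I would convert this convolution estimate into genuine H\"older continuity with a uniform constant via the local argument of \cite{GKZ}. Here the bounded-geometry hypotheses are used once more: a lower bound on the injectivity radius together with bounded curvature yield, around each $z\in X_s$, a geodesic normal chart on a ball of a fixed radius $r_0>0$ in which the coefficients of $\omega_s$ lie between two fixed positive constants and $\rho_\eta u_s$ is comparable to a Euclidean convolution of $u_s$ at scale $\sim\eta$; the sub-mean-value inequality for quasi-plurisubharmonic functions then upgrades $\rho_\eta u_s-u_s\le c\,\eta^{\alpha_1}$ to $\sup_{B(z,\eta)}u_s\le u_s(z)+c'\eta^{\alpha_1}$ with $c'$ uniform, which is equivalent to H\"older continuity of exponent $\alpha_1$ with a uniform H\"older constant.

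The main obstacle is not any single estimate but the systematic bookkeeping of uniformity through the whole argument. Of the constants involved, the two that actually use the hypotheses of Definition \ref{unif} in an essential (rather than purely formal) way are the uniform domination by capacity feeding the stability estimate \cite{DZ}, and the uniform local charts feeding the \cite{GKZ} passage from the convolution bound to H\"older continuity; these are the two points where I would concentrate the care, everything else being a mechanical repetition of the proof of Theorem A.
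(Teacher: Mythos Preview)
Your proposal is correct and follows exactly the approach the paper indicates: the paper's treatment of Theorem~A* is essentially the one-line remark preceding its statement (``in the proof above we could choose the same $\delta_1,\delta_2$ and $\delta_3$ for uniform $\alpha_j,c_j$''), together with the parenthetical comments already inserted in the proof of Theorem~A about uniformity of $K$ and of the $c_j,\delta_j$. You have fleshed out the bookkeeping considerably more than the paper does, and your identification of the two substantive uses of the bounded-geometry hypotheses---the uniform domination by capacity feeding the \cite{DZ} stability estimate, and the uniform normal charts feeding the \cite{GKZ} passage from $\rho_\eta u-u$ to the sup on balls---is accurate and matches what the paper implicitly relies on.
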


As a direct application of Theorem A* one has the following corollary:
\begin{corollary} Suppose $X$ is a compact K\"ahler manifold and $\omega$ is a $\mathcal C^{1,1}$ smooth closed positive form on $X$. Suppose moreover that $\omega$ can be approximated in $\mathcal C^{1,1}$- norm by smooth closed forms with curvatures bounded by a fixed constant. Let also $f$ be any nonnegative function such that $f\in L^p(\omega^n)$ and $\int_Xf\omega^n=\int_X\omega^n$. Then the Monge-Amp\`ere equation
 $$(\omega+dd^c
 u)^n=f\omega^n, sup_{X} u=0$$
has an $\alpha$-H\"older continuous solution $u$ for any $\alpha< 2/(nq+1)$, where $q$ is the conjugate to $p$.
\end{corollary}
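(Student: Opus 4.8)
The plan is to apply Theorem A* to the approximating family and then pass to the limit. Let $\omega_s$ be smooth closed forms with $\omega_s\to\omega$ in the $\mathcal C^{1,1}$-topology and curvatures bounded by a fixed constant $A$. Since $\omega$ is a genuine positive definite $\mathcal C^{1,1}$ form on the \emph{fixed} compact manifold $X$, for $s$ large the $\omega_s$ are positive and uniformly equivalent to $\omega$ (say $\tfrac12\omega\le\omega_s\le 2\omega$ pointwise); in particular $\diam(X,\omega_s)$ and $\int_X\omega_s^n$ are bounded above and below by constants independent of $s$. Together with the uniform curvature bound, a classical estimate of Cheeger then yields a uniform lower bound on the injectivity radii, so that $(X,\omega_s)_{s\gg 1}$ has uniformly bounded geometry in the sense of Definition \ref{unif}.

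Next I would transport the density to each $\omega_s$. Set $c_s:=\big(\int_X\omega_s^n\big)\big/\big(\int_X\omega^n\big)$, so $c_s\to 1$, and define $f_s$ by $f_s\,\omega_s^n:=c_s\,f\,\omega^n$, i.e. $f_s=c_s f\,(\omega^n/\omega_s^n)$. By construction $\int_X f_s\,\omega_s^n=\int_X\omega_s^n$, and since the ratio $\omega^n/\omega_s^n$ is uniformly bounded above and away from zero one gets $\int_X f_s^p\,\omega_s^n\le C\int_X f^p\,\omega^n$; hence $\Vert f_s\Vert_{L^p(\omega_s^n)}\le C$ uniformly in $s$. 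By Theorem A* the solutions $u_s$ of $(\omega_s+dd^c u_s)^n=f_s\,\omega_s^n$ normalized by $\sup_X u_s=0$ are then H\"older continuous of exponent $\alpha$, for any fixed $\alpha<2/(nq+1)$, with H\"older constant independent of $s$; combined with the normalization and the uniform diameter bound this also gives a uniform $\mathcal C^0$-bound on the $u_s$.

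Finally I would let $s\to\infty$. By Arzel\`a--Ascoli a subsequence of $(u_s)$ converges uniformly on $X$ to some $u$, which is again $\alpha$-H\"older. Since $\omega_s+dd^c u_s\ge 0$ and $\omega_s\to\omega$, $dd^c u_s\to dd^c u$ weakly, so $\omega+dd^c u\ge 0$, i.e. $u\in\PSH(X,\omega)\cap\Holder(X,\mathbb R)$. The right-hand sides $c_s f\,\omega^n$ converge to $f\,\omega^n$ (in $L^1$, hence weakly); on the other hand, expanding $(\omega_s+dd^c u_s)^n=\sum_k\binom{n}{k}\omega_s^{n-k}\wedge(dd^c u_s)^k$ and using Bedford--Taylor continuity of $v\mapsto(dd^c v)^k$ along uniformly convergent sequences of bounded plurisubharmonic functions, together with $\omega_s^{n-k}\to\omega^{n-k}$ in $\mathcal C^0$, one gets $(\omega_s+dd^c u_s)^n\to(\omega+dd^c u)^n$ weakly. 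Comparing the two limits gives $(\omega+dd^c u)^n=f\,\omega^n$; subtracting $\sup_X u$ restores the normalization.

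The only genuinely non-formal point is the verification that $(X,\omega_s)$ has uniformly bounded geometry — concretely, deducing the uniform lower bound on the injectivity radius from the uniform curvature bound, the diameter bound and the volume lower bound. The construction of $f_s$, the uniform $L^p$-bound, and the weak-convergence passage to the limit are all routine.
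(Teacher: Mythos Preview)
Your approach is correct and is precisely what the paper has in mind: the corollary is stated there as ``a direct application of Theorem~A*'' with no further argument, and you have supplied the natural details --- verify uniformly bounded geometry for $(X,\omega_s)$, transport the density, apply Theorem~A*, and pass to the limit by Arzel\`a--Ascoli.

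One small technical clean-up: in the convergence step you expand $(\omega_s+dd^c u_s)^n$ binomially and invoke Bedford--Taylor continuity for $(dd^c u_s)^k$. But $u_s$ is only $\omega_s$-psh, not psh, so $(dd^c u_s)^k$ is not a priori a well-defined positive current to which Bedford--Taylor applies directly. The cleaner formulation is local: on a chart write $\omega_s=dd^c\rho_s$, $\omega=dd^c\rho$ with $\rho_s\to\rho$ in $\mathcal C^2$ (this follows from $\omega_s\to\omega$ in $\mathcal C^{1,1}$), so that $\rho_s+u_s$ are genuinely psh, bounded, and converge uniformly to $\rho+u$; then Bedford--Taylor gives $(dd^c(\rho_s+u_s))^n\to(dd^c(\rho+u))^n$ weakly, which is exactly $(\omega_s+dd^c u_s)^n\to(\omega+dd^c u)^n$. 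This is a cosmetic fix; the substance of your argument is fine.
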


Finally we remark that in \cite{DZ} the stability result holds not only for measures absolutely continuous with respect to the Lebesgue measure, but also for any measure dominated by capacity for $L^p$ functions. Observe that in the proof the sole place where we used the assumption that $\mu$ is a measure with density was the application of the Jensen formula in (\ref{used}). Therefore by repeating the above proof one can get the following generalization:

\begin{proposition}\label{rem:geneThmA}
Let $u\in \PSH(X,\omega)$ solve the equation $MA(u)=\mu$ for $\mu$ a probability measure on a compact K\"ahler manifold $(X,\omega)$. Assume that $\mu$ satisfies the following additional assumptions:

i) {$\mu$ satisfies ${\mathcal H}(\infty)$};

ii) $\Vert \rho_{\delta}\phi-\phi \Vert_{L^1(\mu)}=O(\delta^b)$ for some $b>0$.

\noindent Then $u$ is H\"older continuous with  the 
exponent depending only on $n$ and $b$.
\end{proposition}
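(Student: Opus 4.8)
The plan is to re-run the proof of Theorem A almost word for word, isolating the two (and only two) places where the $L^p$-density of $\mu$ was used and replacing each of them by one of the new hypotheses. In that proof the absolute continuity of $\mu$ enters exactly (a) through the Jensen-type $L^1$-estimate (\ref{used}) (Lemma \ref{Jen}), which, combined with the H\"older inequality, produced the smallness of $\mu(E_0)=\int_{E_0}f\omega^n$; and (b) through the stability estimate (\ref{3}), borrowed from \cite{DZ}. Every other ingredient --- the regularization $\rho_\delta u$, the Kiselman--Legendre transforms $U_\delta,u_\delta$, the Hessian bound $\omega+dd^c u_\delta\ge\delta^{\alpha_0}\omega$, the inclusions (\ref{eq:FI}) and the final comparison-principle step --- uses only Lemma 2.1 and a curvature bound for $(X,\omega)$, so it is unaffected; moreover $u$ is bounded and continuous by \cite{K3}, since $\mathcal H(\infty)$ is stronger than Ko\l odziej's condition (A).

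For (a), I would replace the detour through $\int_{E_0}\omega^n$ by a single application of Chebyshev's inequality and hypothesis (ii) (applied to $u$):
$$
\mu(E_0)=\mu\big(\{\rho_\delta u-u>\delta^{\alpha_0}\}\big)\le\delta^{-\alpha_0}\int_{E_0}|\rho_\delta u-u|\,d\mu\le C\,\delta^{\,b-\alpha_0},
$$
which is exactly the smallness of $\mu(E_0)$ the argument needs, once we impose $\alpha_0<b$. For (b), I would use that $\mathcal H(\infty)$ is equivalent to $\mu$ being dominated by capacity for $L^\infty$ functions, so the stability theorem of \cite{DZ} applies directly to $\mu$; having no density to truncate, I truncate the measure instead, setting $\nu:=c\,\mathbf 1_{X\setminus E_0}\,\mu$ with $c:=\mu(X\setminus E_0)^{-1}$, a probability measure with bounded $\mu$-density, hence itself dominated by capacity for $L^\infty$ functions. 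By \cite{K3} there is a continuous $\omega$-psh $v$ solving $MA(v)=\nu$ with $\max(u-v)=\max(v-u)$, and since $\|c\,\mathbf 1_{X\setminus E_0}-1\|_{L^1(\mu)}=O(\mu(E_0))=O(\delta^{\,b-\alpha_0})$, the \cite{DZ} estimate gives $\|u-v\|_{L^\infty(X)}\le c_3\,\delta^{\,\gamma(b-\alpha_0)}$ for some $\gamma=\gamma(n,\varepsilon)>0$ furnished by \cite{DZ}. With these substitutions the derivation of (\ref{eq:FI}) goes through unchanged, since it used only Lemma 2.1, the choice $\theta=e^{-3AB}$, and an upper bound on $\|u-v\|_\infty$.

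It remains to fix the exponents so that Theorem A's scheme closes. I would choose $0<\alpha_1<\alpha<\alpha_0<\min(b,1)$ with moreover $\alpha<\gamma(b-\alpha_0)$ --- possible, e.g., with $\alpha_0=b/2$ and $\alpha_1<\alpha<\tfrac b2\min(1,\gamma)$. Then both inclusions in (\ref{eq:FI}) hold for small $\delta$ (the first needs only $\|u-v\|_\infty<AB\,\delta^\alpha$, the second $\delta^{\alpha_0}+\|u-v\|_\infty<\delta^\alpha$), and the comparison principle, with $\nu(E_0)=0$ and $(\omega+dd^c u_\delta)^n\ge\delta^{n\alpha_0}\omega^n$, forces $\{u_\delta-v>\delta^\alpha\}$, hence $E(\theta\delta,\alpha_1)$, to be empty for all small $\delta$. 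As in Theorem A this yields $\rho_\eta u-u\le C\eta^{\alpha_1}$ for small $\eta$, and the local argument of \cite{GKZ} upgrades it to H\"older continuity of $u$ with exponent $\alpha_1$; since $\alpha_1$ can be taken to be any number below $\tfrac b2\min(1,\gamma(n,1))$, it depends only on $n$ and $b$. I do not expect a genuine obstacle: the mechanism of Theorem A is insensitive to which capacity-dominated measure is fed in. The only steps deserving care are checking that the truncated $\nu$ still meets the hypotheses of \cite{DZ} (it does, its $\mu$-density being bounded and $\mu$ lying in $\mathcal H(\infty)$) and that the exponent constraints are jointly satisfiable (they are); identifying the precise value of the \cite{DZ} stability exponent $\gamma$ for $L^\infty$ densities is the one quantitatively subtle point, but \cite{DZ} supplies it and all we use is $\gamma>0$ controlled by $n$.
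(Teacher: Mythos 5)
Your proposal is correct and follows essentially the same route as the paper, which proves this proposition only by the remark that the argument of Theorem A carries over once hypothesis (ii) replaces the Jensen estimate (the only place the density was used) and the stability theorem of \cite{DZ} is invoked for measures dominated by capacity (equivalently, via $\mathcal H(\infty)$, for bounded densities). Your Chebyshev bound on $\mu(E_0)$, the truncation of the measure in place of the density, and the exponent bookkeeping are exactly the details the paper leaves implicit.
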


Examples of such singular measures have been considered in \cite{Hi}.

\section{Some properties of $\MAH(X,\omega)$}
\subsection{The one dimensional case}\label{onedim}

In this section we recall for reader's convenience the classical one dimensional theory of H\"older continuous potentials. We refer to \cite{DS} for more details. It is worthwhile to recall that the problem on Riemann surfaces is linear and hence much easier: analogous statements in the case of planar domains are classical in potential theory.

\begin{proposition} \label{pro:holderdim1} Let $(X,\omega)$ be a compact Riemann surface.
Let also $\mu=\omega+dd^c \phi$ be a probability measure on $X$, where $\phi \in PSH(X,\omega)$ and $\mathbb B(a,r)$ be the ball (with respect to the metric
induced by $\omega$) centered at point
$a$ with radius $r$.
The following properties are equivalent:

i) the function  $\phi$ is H\"older continuous ;

ii)  there exists constants $\alpha, C>0$ such that $\mu(\mathbb B(a,r)) \leq C r^{\alpha}$,
for all $a \in X$ and $0<r<1$;

iii) there exists $\varepsilon >0$ such that
$
 \exp\left( -\varepsilon PSH(X,\omega \right) ) \subset L^1(\mu).
$

\end{proposition}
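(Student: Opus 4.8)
The plan is to establish the chain of implications $i)\Rightarrow ii)\Rightarrow iii)\Rightarrow i)$, exploiting the fact that on a Riemann surface the Monge-Amp\`ere operator is the (real) Laplacian, so all the estimates become linear. First, for $i)\Rightarrow ii)$: assume $\phi$ is H\"older of exponent $\gamma$, fix $a\in X$ and $0<r<1$, and work in a coordinate chart where $\omega=dd^c h$ for a smooth local potential $h$, so that $\mu=\Delta(\phi+h)\,dV$ up to a normalizing constant. The mass $\mu(\mathbb{B}(a,r))$ is then $\int_{\mathbb{B}(a,r)}dd^c(\phi+h)$, which by Stokes/the sub-mean-value inequality is controlled by the oscillation of $\phi+h$ on a ball of radius $2r$; since $\phi$ is H\"older and $h$ is smooth, this oscillation is $O(r^{\gamma})$. (Concretely, one compares $\int_{\mathbb{B}(a,2r)}dd^c\psi$ with $r^{-2}\int_{\mathbb{B}(a,2r)}(\check\psi_{2r}-\psi)$ via the Jensen formula (\ref{jensen}) in dimension $n=1$, and the right side is $O(r^{-2})\cdot r^2\cdot\mathrm{osc}_{\mathbb{B}(a,3r)}\psi=O(r^{\gamma})$.) This gives $ii)$ with $\alpha=\gamma$.

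For $ii)\Rightarrow iii)$, I would use the logarithmic potential representation on the Riemann surface: write $\phi(z)+h(z)$ locally as $\int\log|z-w|\,d\mu(w)+(\text{smooth})$, or more intrinsically use a Green-function kernel $G(z,w)$ with a logarithmic pole on the diagonal, so that every $\psi\in PSH(X,\omega)$ with $\sup_X\psi=0$ satisfies $\psi(z)=\int_X G(z,w)\,d\nu_\psi(w)+C_\psi$ with $\nu_\psi=\omega+dd^c\psi$ a probability measure and $C_\psi$ uniformly bounded. Then $\int_X e^{-\varepsilon\psi}\,d\mu\le C\int_X\int_X e^{-\varepsilon\int G(z,w)d\nu_\psi(w)}\,d\mu(z)$, and by Jensen's inequality applied to the probability measure $\nu_\psi$ this is $\le C\int_X\int_X e^{\varepsilon|\log d(z,w)|\cdot O(1)}\,d\nu_\psi(w)\,d\mu(z)$. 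By Fubini it suffices to bound $\sup_z\int_X d(z,w)^{-\varepsilon C'}\,d\mu(w)$ uniformly, and this follows from the growth hypothesis $ii)$ by the standard layer-cake computation $\int d(z,w)^{-s}d\mu(w)=s\int_0^\infty t^{-s-1}\mu(\mathbb{B}(z,t))\,dt<\infty$ whenever $s<\alpha$. Choosing $\varepsilon$ small enough that $\varepsilon C'<\alpha$ gives $(\dag)$, i.e. $iii)$.

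For $iii)\Rightarrow i)$: from $\exp(-\varepsilon PSH(X,\omega))\subset L^1(\mu)$ one first deduces, by a uniform boundedness / closed graph argument (the set $\{\psi\in PSH(X,\omega):\sup_X\psi=0\}$ is compact in $L^1$), that $\sup_{\psi}\int_X e^{-\varepsilon\psi}\,d\mu=:M<\infty$. Applying this to the candidate functions $\psi=\psi_{a,r}$ that are close to $\log(d(\cdot,a)/\mathrm{diam})$ near $a$ (the $\omega$-psh regularization of a normalized logarithm with pole at $a$) yields $\mu(\mathbb{B}(a,r))\le M r^{\varepsilon}$, which is exactly $ii)$; so $iii)\Rightarrow ii)$. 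Finally $ii)\Rightarrow i)$: using the logarithmic potential representation $\phi(z)=\int_X G(z,w)\,d\mu(w)+C$, for two points $z_1,z_2$ with $d(z_1,z_2)=r$ one estimates $|\phi(z_1)-\phi(z_2)|\le\int_X|G(z_1,w)-G(z_2,w)|\,d\mu(w)$, splitting the integral over $\mathbb{B}(z_1,2r)$ (where one uses $\int_{\mathbb{B}(z_1,2r)}|\log d(z_i,w)|\,d\mu(w)=O(r^{\alpha}\log(1/r))$ from $ii)$ and the layer-cake bound) and over its complement (where $|G(z_1,w)-G(z_2,w)|\le C r/d(z_1,w)$ by the mean value theorem, and $\int_{d(z_1,w)>2r}d(z_1,w)^{-1}d\mu(w)=O(r^{\alpha-1})$ again by $ii)$, so this piece is $O(r^{\alpha})$). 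Combining, $|\phi(z_1)-\phi(z_2)|=O(r^{\alpha'})$ for any $\alpha'<\alpha$, which is H\"older continuity.

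The main obstacle is the implication $iii)\Rightarrow ii)$ (equivalently, extracting a \emph{quantitative} uniform bound from the qualitative integrability statement): one needs the compactness of the normalized family of $\omega$-psh functions in $L^1$ together with a Baire-category or closed-graph argument to promote $\exp(-\varepsilon PSH)\subset L^1(\mu)$ to a uniform bound $\sup_\psi\int e^{-\varepsilon\psi}d\mu<\infty$, and then one must produce the right test functions (normalized logarithmic poles) to read off the ball-growth estimate. The potential-theoretic estimates in the other implications are essentially the classical Riemann-surface versions of Skoda's theorem and of the continuity of logarithmic potentials, and are routine once the Green kernel with logarithmic singularity is set up.
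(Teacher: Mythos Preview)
The paper does not supply its own proof of Proposition~\ref{pro:holderdim1}: the result is presented as classical one-dimensional potential theory, with the reader referred to \cite{DS}, and Remark~\ref{rem:Laplace} merely records the underlying local equivalence between H\"older regularity of a subharmonic function at a point and the growth of its Riesz mass on small balls. Your sketch is exactly the standard route and is correct. The Green-kernel representation $\phi=\int_X G(\cdot,w)\,d\mu(w)+\text{const}$ handles both $i)\Rightarrow ii)$ and $ii)\Rightarrow i)$ (the latter via the near/far splitting you describe), and Jensen's inequality for the probability measure $\nu_\psi=\omega+dd^c\psi$ together with the layer-cake estimate gives $ii)\Rightarrow iii)$.

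You are right that the only genuinely nontrivial step is promoting the qualitative hypothesis in $iii)$ to a \emph{uniform} bound $\sup_{\psi\in\mathcal K}\int e^{-\varepsilon'\psi}\,d\mu<\infty$ on the compact set $\mathcal K=\{\psi\in PSH(X,\omega):\sup_X\psi=0\}$, after which testing against $\psi_a=G(\cdot,a)$ yields $ii)$ with a constant independent of~$a$. The Baire argument you allude to does the job: the sublevel sets $F_N=\{\psi:\int e^{-\varepsilon\psi}\,d\mu\le N\}$ are closed in $\mathcal K$ (Hartogs' lemma gives $\limsup\psi_j\le\psi$ pointwise, then apply Fatou with respect to~$\mu$), so some $F_{N_0}$ contains an $L^1$-ball about some $\psi_0$; for arbitrary $\psi\in\mathcal K$ and small fixed $t>0$ the convex combination $\psi_t=(1-t)\psi_0+t\psi$ lies in that ball, and since $\psi_0\le 0$ one has $e^{-t\varepsilon\psi}\le e^{-\varepsilon\psi_t}\le e^{-\varepsilon\psi_t}$, giving the uniform bound with $\varepsilon'=t\varepsilon$. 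This is the argument one finds in \cite{DS}; the paper itself takes it for granted (see the first paragraph of Section~\ref{symmetries}).
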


\begin{Remark} \label{rem:Laplace}
As the Laplacian is a linear operator, Proposition  \ref{pro:holderdim1} is actually a local result.
It further holds for  higher dimensional subharmonic functions. We let the reader check that if $u$ is
a subharmonic function in some domain $\Omega \subset \mathbb R^n$
which contains the origin, and $0<\alpha<1$, then the
following are equivalent:
\begin{enumerate}
\item $\sup_{\mathbb B(\delta)} u -u(0) \leq C_1 \delta^{\alpha}$, for some $C_1>0$ and $0<\delta<<1$;
\item $\frac{1}{vol(\mathbb B(\delta))} \int_{\mathbb B(\delta)} u(z) \, dV(z) -u(0) \leq C_1 \delta^{\alpha}$, where $C_1>0$, \, $0<\delta<<1$;
\item $\int_{\mathbb B(\delta)} \Delta u  \leq C_3 \delta^{\alpha+n-2}$, for some $C_3>0$ and $0<\delta<<1$.
\end{enumerate}
It classically follows from this observation that any subharmonic function is $\alpha$-H\"older continuous
(respectively ${\mathcal C}^{1,\alpha}$) outside a set of arbitrarily small
$(n-2+\alpha)$-Hausdorff (respectively $(n-1+\alpha)$-Hausdorff) content.
\end{Remark}

\subsection{Characterization of $MAH(X,\omega)$}\label{characterization}

Let $\Omega$ be a bounded domain in $\mathbb C^n$. Analogously to the formula (\ref{jensen})
for each $u\in PSH (\Omega )$ and $\delta >0$ we set
$$
\check{u}_{\delta }(z)= \frac 1 {v_{2n}\delta ^{2n}} \int_{\mathbb B_\delta} u(z + w) dV (w)
\; \; \text{ and } \; \;
u_\delta (z) =\sup_{w\in\mathbb B_\delta } u(z+w),
$$
for $z\in\Omega_\delta =\{z\in\Omega: d(z,\partial\Omega )>\delta\}$.
Here
$$
\mathbb B_\delta =\{z\in\mathbb C^n:\ \Vert z\Vert =(|z_1|^2+...+|z_n|^2)^{\frac 1 2}<\delta \}
$$
and $v_{2n}$ is the volume of the unit ball $\mathbb B_1$.

\begin{theorem}  \label{thm:char}
Let $(X,\omega)$ be  a compact K\"ahler manifold, $\mu$  a positive Borel measure on $X$ so that
$\mu(X)=\int_X \omega^n$. The following are equivalent:
\begin{itemize}
\item[\rm i)] There exists a H\"older continuous $\omega$-psh $\varphi$ such that $\mu=(\omega+dd^c \varphi)^n$.
\smallskip
\item[\rm ii)] For every $z\in X$, there exists a neighborhood $D$ of $z$ and a H\"older continuous psh $v$ on $D$ such that $\mu |_D \leq (dd^c v)^n$.
\smallskip
\item[\rm iii)] $\mu\in\mathcal H({\infty})$  and there exists $C, \alpha >0$ such that
$
\int_K [\check{u}_{\delta }- u] d\mu \leq C \int_{\bar D} \Delta u \ \delta^{\alpha},
$
for all $u\in{\PSH}\cap L^\infty (\Omega)$, $K\subset\subset D\subset\subset \Omega$, where $\Omega$ is a local chart.
\end{itemize}
\end{theorem}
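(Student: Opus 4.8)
The plan is to prove the cyclic chain of implications $\mathrm{i)}\Rightarrow\mathrm{ii)}\Rightarrow\mathrm{iii)}\Rightarrow\mathrm{i)}$. The implication $\mathrm{i)}\Rightarrow\mathrm{ii)}$ is the easiest: given a global H\"older continuous $\omega$-psh $\varphi$ with $(\omega+dd^c\varphi)^n=\mu$, fix $z\in X$ and a small coordinate ball $D$ on which $\omega=dd^c g$ for some smooth (hence Lipschitz, hence H\"older) local potential $g$. Then $v:=\varphi+g$ is psh and H\"older continuous on $D$, and $(dd^c v)^n=(\omega+dd^c\varphi)^n=\mu$ on $D$; in particular $\mu|_D\le(dd^c v)^n$. (One shrinks $D$ slightly so that $g$ is defined and bounded on a neighborhood of $\bar D$.)

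For $\mathrm{ii)}\Rightarrow\mathrm{iii)}$, the first task is to produce the global capacity estimate $\mu\in\mathcal H(\infty)$. Locally $\mu|_D\le(dd^c v)^n$ with $v$ H\"older on $D$; by the local theory (the Bedford--Taylor capacity estimates, cf. the discussion around \eqref{jensen} and Remark~\ref{rem:Laplace}) a local Monge--Amp\`ere measure of a H\"older psh function is dominated by a power of the local relative capacity, hence satisfies the local analogue of $\mathcal H(\infty)$; covering $X$ by finitely many such charts and comparing $\Capa_\omega$ with the local Bedford--Taylor capacities (which differ by bounded factors on relatively compact charts) upgrades this to the global statement $\mu(K)\le C_\alpha\Capa_\omega(K)^{1+\alpha}$ for every $\alpha$. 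The second task is the Jensen-type inequality. Write the left side using the Jensen formula \eqref{jensen}: $\check u_\delta-u$ is, up to a dimensional constant, the double average $\int_0^\delta r^{2n-1}\int_0^r t^{1-2n}\big(\int_{|\zeta|\le t}\Delta u\,dV(\zeta)\big)dt\,dr$ evaluated at the relevant center. The point is that $u$ is only psh, not $\mathcal C^2$, so one interprets $\Delta u$ as the (positive) Riesz measure and these formulas as an identity of positive quantities. Now estimate: for a psh $u$ and a H\"older psh $v$ of exponent $\beta$ on $D$, use the comparison $\Delta u\le (dd^c u)\wedge(dd^c|z|^2)^{n-1}/(\text{const})$ and the mixed Monge--Amp\`ere inequality together with the H\"older continuity of $v$ to bound $\int_{B(z,t)}d\mu$ against $t^{2\beta}\int_{\bar D}\Delta u$ — this is precisely where H\"older continuity of $v$ enters quantitatively, exactly as in the proof that H\"older potentials give measures controlled by a power of radius (cf. Proposition~\ref{pro:holderdim1}ii and the higher-dimensional estimates used for Theorem~A). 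Feeding such a bound $\int_{B(z,t)}d\mu\le C\,t^{2\beta}\int_{\bar D}\Delta u$ into the Jensen double integral and carrying out the elementary $t,r$ integrations produces $\int_K[\check u_\delta-u]\,d\mu\le C\delta^{\alpha}\int_{\bar D}\Delta u$ with $\alpha$ a fixed positive power (some fraction of $\beta$).

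The implication $\mathrm{iii)}\Rightarrow\mathrm{i)}$ is the substantive part and should be handled by running the regularization machinery exactly as in the proof of Theorem~A, but using hypothesis iii) in place of the Jensen/Laplacian bound \eqref{used}. Since $\mu\in\mathcal H(\infty)$, Ko\l odziej's theorem gives a continuous $\omega$-psh $u$ with $(\omega+dd^c u)^n=\mu$; normalize $\min_X u=1$, $B=\max_X u$. Form the regularizations $\rho_\delta u$ and the Kiselman--Legendre transforms $u_{c,\delta}$ of Lemma~2.1. The key quantitative input needed to run the argument of Theorem~A is a bound of the form $\int_X(\rho_\delta u-u)\,d\mu\le C\delta^\alpha$ (this replaces \eqref{used}); but by definition $\rho_\delta u-u$ is, in each coordinate chart and up to bounded factors coming from comparing $dV_\omega(\log_z\cdot)$ with Lebesgue measure (exactly the comparison carried out in the proof of Lemma~\ref{Jen}), dominated by $\check u_{C\delta}-u$ for the Euclidean average, so applying iii) with $K=X$ (covered by finitely many charts) and using $\int_{\bar D}\Delta u\le C\int_X\omega^n<\infty$ gives precisely $\int_X(\rho_\delta u-u)\,d\mu\le C\delta^\alpha$. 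With this in hand, the remaining steps are verbatim those of Theorem~A: define the exceptional sets $E(\delta,\alpha_j)$, use $\mathcal H(\infty)$ (equivalently, domination by capacity for $L^\infty$, hence the stability estimate of \cite{DZ}) to compare $u$ with the auxiliary solution $v$ on the complement of $E_0$, establish the set inclusions \eqref{eq:FI}, apply the comparison principle to conclude $E(\theta\delta,\alpha_1)=\emptyset$, and finally pass from $\rho_\eta u-u\le C\eta^{\alpha_1}$ to H\"older continuity of $u$ via the local argument of \cite{GKZ}.

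The main obstacle is the middle implication $\mathrm{ii)}\Rightarrow\mathrm{iii)}$, specifically making the Jensen-inequality estimate rigorous for merely psh (not $\mathcal C^2$) functions $u$ while keeping the constant uniform over all such $u$ in terms of $\int_{\bar D}\Delta u$: one must justify the distributional Jensen identity, control the mixed term $\int_{B(z,t)}(dd^c v)^{n-1}\wedge dd^c u$ using only the modulus of continuity of $v$ (via integration by parts, throwing all derivatives onto $v$ on a slightly larger ball, and bounding boundary terms), and check that the constants degrade only by bounded factors under the finite coordinate-chart cover and the transition between $\Capa_\omega$ and the local Bedford--Taylor capacities. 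The passage from iii) back to the $\delta^\alpha$-bound on $\int_X(\rho_\delta u-u)\,d\mu$ also requires the comparison of the Riemannian convolution kernel with the Euclidean one, but this is already essentially done in Lemma~\ref{Jen} and only needs to be cited.
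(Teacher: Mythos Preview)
Your treatment of $\mathrm{i)}\Rightarrow\mathrm{ii)}$ and $\mathrm{iii)}\Rightarrow\mathrm{i)}$ is fine and matches the paper (the latter is precisely Proposition~\ref{rem:geneThmA}). The genuine gap is in $\mathrm{ii)}\Rightarrow\mathrm{iii)}$, which you correctly flagged as the main obstacle but whose proposed resolution does not work in dimension $n\ge 2$.

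Your route is: apply Fubini in the Jensen formula to rewrite $\int_K[\check u_\delta-u]\,d\mu$ as an integral of $\mu(B(w,t))$ against $\Delta u(w)$ and the Jensen kernel, then insert a ball-mass bound for $\mu$ coming from the H\"older continuity of $v$. (The displayed inequality $\int_{B(z,t)}d\mu\le Ct^{2\beta}\int_{\bar D}\Delta u$ is not well-posed since the left side is independent of $u$; I read it as the ball-mass estimate $\mu(B_t)\le Ct^\gamma$.) After Fubini the inner integral is essentially $\int_0^\delta t^{1-2n}\,\mu(B(w,t))\,dt$, and for this to be $O(\delta^\alpha)$ with $\alpha>0$ one needs $\gamma>2n-2$. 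But the Chern--Levine--Nirenberg inequality for a $\beta$-H\"older psh $v$ only gives $(dd^c v)^n(B_t)\le C(\mathrm{osc}_{B_{2t}}v)^n\le Ct^{n\beta}$, and $n\beta>2n-2$ forces $\beta>2-2/n\ge 1$ once $n\ge 2$. So this Jensen/Fubini/ball-mass scheme works only for $n=1$; in higher dimension the singularity $|z-w|^{2-2n}$ of the Green kernel is too strong for the available mass bound.

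The paper bypasses this obstruction by never estimating $\mu$ on balls. Instead it regularizes $v$: after replacing $v$ by $\max(v,\|z\|^2-4)$ so that $v$ is quadratic near $\partial\mathbb B_2$, it builds a mollification $\bar v_{\delta^\varepsilon}$ with spatially varying radius (via a cutoff $\phi$) which is psh, satisfies $0\le\bar v_{\delta^\varepsilon}-v\le\delta^{\varepsilon s}$, and \emph{coincides with $v$ near the boundary}. One splits $(dd^c v)^n=\big[(dd^c v)^n-(dd^c\bar v_{\delta^\varepsilon})^n\big]+(dd^c\bar v_{\delta^\varepsilon})^n$. For the second piece the crude bound $(dd^c\bar v_{\delta^\varepsilon})^n\le C\delta^{-2n\varepsilon}dV$ together with the flat Jensen estimate $\int[\check u_\delta-u]\,dV\le C\delta^2\int\Delta u$ gives $\delta^{2-2n\varepsilon}\int\Delta u$. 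For the first piece one telescopes $(dd^c v)^n-(dd^c\bar v_{\delta^\varepsilon})^n=dd^c(v-\bar v_{\delta^\varepsilon})\wedge T$ and integrates by parts; because $v-\bar v_{\delta^\varepsilon}$ has compact support there is no boundary term, and one obtains $\int(\bar v_{\delta^\varepsilon}-v)\,dd^c u\wedge T\le\delta^{\varepsilon s}\int dd^c u\wedge T\le C\delta^{\varepsilon s}\|v\|_\infty^{n-1}\int\Delta u$. Optimizing $\varepsilon=\tfrac{2}{s+2n}$ gives $\alpha=\tfrac{2s}{s+2n}$. The key idea you are missing is this integration-by-parts against a compactly supported $v-\bar v_{\delta^\varepsilon}$, which trades one $dd^c$ on $v$ for one on $u$ and thereby avoids the dimensional loss.
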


A positive measure $\mu$ thus  belongs to $MAH(X,\omega)$ if and only if it is locally the
Monge-Amp\`ere measure of a H\"older-continuous psh function.

\begin{proof}
 The implication i)~$\Rightarrow$~ii) is immediate.
The implication iii)~$\Rightarrow$~i) was observed to hold in Proposition \ref{rem:geneThmA}.

We now consider the implication ii)~$\Rightarrow$~iii).
It is enough to prove the inequality
$$
\int_K [\check{u}_{\delta } -u] (dd^ v)^n \leq C\int_{\overline D}\Delta u\delta^\alpha,
$$
for all $u\in \PSH\cap L^\infty (\Omega)$, $K\subset\subset D\subset\subset\Omega$ and for any local chart $\Omega$.

We can assume without loss of generality that $K=\mathbb B_1$ is the unit ball in $\mathbb C^n$, $D=\mathbb B_2$ and $-2\leq v\leq -1$, $|v(z)-v(w)|\leq \Vert z-w\Vert^s$ for all $z,w\in\mathbb B_2$. This implies that $h(z):=\Vert z\Vert^2-4<v$ on $\mathbb B_1$, while $v<h$ on $\mathbb B_2\backslash\mathbb B_{r_0}$ for some $1<r_0<2$.

Replacing $v$ by $\max (v,h)$ we can assume that $v=h$ on $\mathbb B_2\backslash\mathbb B_{r_0}$.
Fix $\rho\in C_0^\infty(\mathbb C^n)$ such that $\rho\geq 0$, $\rho(z) = \rho(\Vert z\Vert )$, supp$\rho\subset\mathbb B_1$ and $\int_{\mathbb C^n}\rho (z)\,dV(z)=1$. Set
$$\hat v_\delta (z)=\int_{\mathbb B_1} v(z-\delta w) \rho (w)\,dV (w)=\frac 1 {\delta^{2n}}\int_{\mathbb B(z,\delta )} v(w) \rho\Big(\frac {z-w}{\delta}\Big)\,dV (w).$$
Observe that
$$
(1)\ \ \ \ \ \ \hat v_\delta (z)-v(z)=\int_{\mathbb B_1} [v(z-\delta w)-v(z)]\rho (w)\,dV (w)\leq \delta ^s
$$
and
$$
(2)\ \ \ \ \ \ \ \ \ \ \ \
\left|\frac {\partial ^2\hat v_\delta}{\partial z_j\partial\overline z_k} (z) \right|\leq \frac {C\Vert v\Vert_{L^\infty (\Omega )} }{\delta ^2}, \ \ \ \ \ (dd^c \hat v_\delta )^n\leq \frac {C\,dV} {\delta^{2n}}.
$$

Choose now $\phi\in C_0^\infty(\mathbb C^n)$ such that $0\leq\phi\leq 1$, $\phi=1$ on $\mathbb B_{r_1}$ and supp$\phi\subset\mathbb B_{r_2}$, where $r_0<r_1<r_2<2$. Set
$$\overline v_{\delta}(z)=\int_{\mathbb B_1} v\big(z-\delta\phi (z) w\big) \rho (w)\,dV (w).$$
Observe that
$$(3)\ \ \ \ \ \ \overline v_\delta (z)-v(z)=\int_{\mathbb B_1} [v\big( z-\delta\phi(z) w\big)-v(z)]\rho (w)\,dV (w)\leq \delta ^s$$
and
$$(4)\ \ \ \ \ \ \ \ \ \ \ \ \ \ \ \ \ \overline v_{\delta}(z)=\hat v_\delta (z)\text{ on }\mathbb B_{r_1}, \ \ \ \ \ \ \overline v_{\delta}(z)=v(z)\text{ on }\mathbb B_{2}\backslash\mathbb B_{r_2}.$$
Fix now any $z\in \mathbb B_2\backslash\overline{\mathbb B}_{r_0}$. Since $v=h$ there, we have for any $\delta<\delta_0$,
$$
\aligned\frac {\partial ^2\overline v_\delta}{\partial z_j\partial\overline z_k}(z)&=\int_{\mathbb B_1} [\frac {\partial ^2 h}{\partial z_j\partial\overline z_k} \big(z-\delta\phi (z) w\big) + \delta O(1)] \rho (w)\,dV (w)\\
&=\int_{\mathbb B_1} [\delta_{jk} + \delta O(1)] \rho (w)\,dV (w)\\
&=\delta_{jk} + \delta O(1).
\endaligned
$$

Therefore $\overline v_\delta\in \PSH (\mathbb B_2\backslash\overline{\mathbb B}_{r_0})$, $\forall\delta <\delta_0$, hence $\overline v_{\delta}$ is actually plurisubharmonic in all of $\mathbb B_2$
(if $\delta$ is small enough), as follows from (4). Set
$$
T:=\sum\limits_{j=0}^{n-1} (dd^cv)^j\wedge (dd^c\overline v_{\delta^\varepsilon})^{n-1-j}.
$$
>From (3), (4) and Stokes fomula we get
$$
\aligned\int_K [\check{u}_{\delta }- u] (dd^cv)^n&\leq\int_{\mathbb B_2} [\check{u}_{\delta }- u] (dd^cv)^n\\
&=\int_{\mathbb B_2} [\check{u}_{\delta }- u] [(dd^cv)^n-( dd^c\bar v_{\delta^{\varepsilon}} )^n]+\int_{\mathbb B_2} [\check{u}_{\delta }- u] ( dd^c\bar v_{ \delta^{\varepsilon} } )^n\\
&\leq \int_{\mathbb B_2} [\check{u}_{\delta }- u]  dd^c(v-\bar v_{\delta^{\varepsilon}}) \wedge T+\frac C { \delta^{2n\varepsilon} }\int_{\mathbb B_2} [\check{u}_{\delta }- u]\,dV\\
&\leq \int_{\mathbb B_2} [\bar v_{\delta^{\varepsilon}}-v] dd^c(u - \check{u}_{\delta } ) \wedge T+\frac {C\int_{\mathbb B_2} \Delta u \delta^2} { \delta^{2n\varepsilon} }\\
&\leq \int_{\mathbb B_2} [\bar v_{\delta^{\varepsilon}}-v] dd^c u \wedge T+C\int_{\mathbb B_2} \Delta u \delta^{2(1-n\varepsilon)}\\
&\leq \delta^{\varepsilon s}\int_{\mathbb B_{r_2}} dd^c u \wedge T+C \delta^{2(1-n\varepsilon)}\int_{\mathbb B_2} \Delta u\\
&\leq C[ \delta^{\varepsilon s}\Vert v\Vert_{L^\infty (\Omega )}^{n-1}\int_{\mathbb B_2} \Delta u + \delta^{2(1-n\varepsilon)}\int_{\mathbb B_2} \Delta u]\\
&\leq C \int_{\mathbb B_2} \Delta u \ \delta^{\alpha},\endaligned$$
where $\varepsilon =\frac 2 {s+2n}$, $\alpha =\frac {2s} {s+2n}$.
\end{proof}

\subsection{Proof of Theorem B}\label{thmB}
Below we derive several simple consequences of this characterization. First, the range
of the complex Monge-Amp\`ere operator has the ``$L^p$-property'':

\begin{corollary}
Let $\psi\in\PSH(X,\omega )$ be a H\"older continuous function. Consider a density
 $0 \leq f\in L^p( (\omega+dd^c \psi)^n )$ with $p>1$ and $\int _X f (\omega+dd^c \psi)^n =\int_X\omega^n$.
Then there exists a H\"older continuous $\omega$-plurisubharmonic function $\varphi$ such that
$$
(\omega+dd^c \varphi)^n = f (\omega+dd^c \psi)^n.
$$
In particular $\MAH(X,\omega)$ is a convex set.
\end{corollary}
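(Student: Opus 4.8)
The goal is to deduce the corollary from the characterization Theorem~\ref{thm:char}, specifically by verifying the local criterion~ii): $f(\omega+dd^c\psi)^n$ belongs to $\MAH(X,\omega)$ as soon as it is locally dominated by the Monge-Amp\`ere measure of a H\"older continuous psh function. So fix $z\in X$, and work in a small coordinate ball $D$ around $z$ on which $\psi = \psi_0 + h$ with $\psi_0$ psh, $h$ smooth, and where we may arrange (shrinking $D$ and adding a smooth plurisubharmonic correction $C\Vert\cdot\Vert^2$) that $v:=\psi_0 + C\Vert\cdot\Vert^2$ is a genuine H\"older continuous psh function on $D$ with $(\omega+dd^c\psi)|_D \leq dd^c v$ as $(1,1)$-currents. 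The point is that $\psi$ H\"older implies $\psi_0$ H\"older, hence $v$ H\"older, and $(dd^c v)|_D \geq (\omega + dd^c\psi)|_D \geq 0$.

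\textbf{Key steps.} First, I would record that since $(\omega+dd^c\psi)|_D \leq dd^c v$ as positive $(1,1)$-forms on $D$, and since the Monge-Amp\`ere operator is monotone along this kind of comparison when one can write $dd^c v = (\omega + dd^c \psi)|_D + dd^c w$ with $w$ psh (here $w = v - \psi_0 - h + (\text{local potential of }\omega)$ is psh up to smooth terms, which one absorbs by a further quadratic correction), one gets $(\omega+dd^c\psi)^n|_D \leq (dd^c v)^n$. Hence $\mu_0 := (\omega+dd^c\psi)^n$ satisfies $\mu_0|_D \leq (dd^c v)^n$ with $v$ H\"older. But we want this for $f\mu_0$, not $\mu_0$. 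Here I would invoke the $L^p$-property of $\MAH(X,\omega)$ that is the subject of Theorem~B: by that theorem (already established, as \emph{this} corollary is merely restating the convexity half), $f\mu_0 \in \MAH(X,\omega)$. Alternatively, and this is the self-contained route, one applies Proposition~\ref{rem:geneThmA}: one checks that $f\mu_0$ satisfies $\mathcal H(\infty)$ and the regularization bound ii) of that proposition. The $\mathcal H(\infty)$ part follows from Theorem~\ref{thm:char}~ii) $\Rightarrow$ iii) applied to the measure $f\mu_0$, once we know $f\mu_0$ is locally dominated by $(dd^c \tilde v)^n$ for a H\"older psh $\tilde v$ --- and \emph{that} is the content of the local estimate in Theorem~A's proof combined with the stability estimate from \cite{DZ}, exactly as in the proof of the Corollary to Theorem~A*. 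So concretely: locally solve $(dd^c \tilde v)^n = f \cdot \chi (dd^c v)^n$ on a slightly smaller ball with appropriate boundary data, where $\chi$ renormalizes so the masses match and $f\mu_0 \leq f\chi^{-1}(dd^c v)^n$ pointwise; since $f \in L^p$ of a measure which is itself $\mathcal H(\infty)$, Ko\l odziej's local theory plus the H\"older a priori estimate (Theorem~A, applied in the local Dirichlet setting, or \cite{GKZ}) produce a H\"older continuous solution $\tilde v$.

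\textbf{Main obstacle.} The genuinely delicate point is the passage from ``$f\in L^p((\omega+dd^c\psi)^n)$'' to a usable \emph{local} domination by the MA-measure of a H\"older psh function: $(\omega+dd^c\psi)^n$ is a possibly quite singular measure (it need not have $L^p$ density w.r.t.\ Lebesgue even if $\psi$ is H\"older), so one cannot directly quote the classical $L^p$-density existence result of \cite{K2}. The way around this is precisely Theorem~\ref{thm:char} itself, whose equivalence iii) packages the right hypothesis: one shows $f\mu_0$ inherits property iii) from $\mu_0$ by Cauchy--Schwarz/H\"older in $L^p$--$L^q$ against the Jensen-type inequality, using that $\mu_0 = (dd^c v)^n$ locally with $v$ H\"older controls $\int_K f\,d\mu_0 \leq \Vert f\Vert_{L^p(\mu_0)} \mu_0(K)^{1/q}$ and $\mu_0(K) \leq C\Capa_\omega(K)^{1+\alpha}$. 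Once property iii) is verified for $f\mu_0$, the implication iii)$\Rightarrow$i) of Theorem~\ref{thm:char} gives a global H\"older $\varphi$ with $(\omega+dd^c\varphi)^n = f\mu_0 = f(\omega+dd^c\psi)^n$, as desired. Convexity of $\MAH(X,\omega)$ is then immediate: given $\mu_0,\mu_1 \in \MAH(X,\omega)$ and $t\in(0,1)$, the convex combination $t\mu_0 + (1-t)\mu_1$ has a density in $L^\infty \subset L^p$ with respect to, say, $\mu_0 + \mu_1 \in \MAH(X,\omega)$ (the latter being a sum of two ranges, handled by the same local argument or by noting $\tfrac12(\mu_0+\mu_1)$ is dominated locally by a sum of two H\"older MA-measures), so the $L^p$-property just proved applies.
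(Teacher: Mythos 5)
Your final argument---the one you arrive at in the ``main obstacle'' paragraph---is exactly the paper's proof: since $(\omega+dd^c\psi)^n\in\MAH(X,\omega)$, it satisfies condition iii) of Theorem \ref{thm:char} (the $\mathcal H(\infty)$ property and the Jensen-type bound), and a single application of the H\"older inequality in $L^p$--$L^q$ transfers both estimates to $f(\omega+dd^c\psi)^n$, after which the implication iii)$\Rightarrow$i) produces the H\"older continuous potential $\varphi$. The earlier detours should simply be deleted: invoking Theorem B is circular (this corollary \emph{is} Theorem B), and the proposed route through criterion ii) by solving a local Dirichlet problem $(dd^c\tilde v)^n=f\chi\,(dd^c v)^n$ is unsupported for precisely the reason you yourself then point out---the reference measure need not have an $L^p$ density with respect to Lebesgue measure, so Ko\l odziej's local $L^p$ theory and the H\"older a priori estimates do not apply to it; nothing in your final argument uses these steps. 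For the convexity statement, your first formulation begs the question: asserting $\mu_0+\mu_1\in\MAH(X,\omega)$ is, up to normalization, the very convexity being proved. The paper avoids this by taking $\psi=(\phi_1+\phi_2)/2$, which is H\"older continuous and $\omega$-psh, observing that $(\omega+dd^c\psi)^n\geq 2^{-n}(\mu_1+\mu_2)$ by expanding the mixed Monge-Amp\`ere terms, and then applying the first part of the corollary with the resulting bounded density. Your parenthetical fix (local domination, i.e.\ $(dd^c v_1)^n+(dd^c v_2)^n\leq (dd^c(v_1+v_2))^n$ with $v_1+v_2$ H\"older psh, followed by criterion ii) of Theorem \ref{thm:char}) does work, but it must be stated explicitly; as written, ``$\mu_0+\mu_1\in\MAH$'' is an unproved claim rather than an immediate fact.
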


\begin{proof}
  By H\"older inequality we have
$$\int\limits_K f \omega_\psi^n \leq \Vert f\Vert_{L^p( \omega_\psi^n )}[\omega_\psi^n (K)]^{1-\frac 1 p},$$
for any Borel subset $K$ of $X$. This implies that $f \omega_\psi^n\in\mathcal H(\infty )$. On the other hand, by H\"older inequality we have
$$
\int\limits_K [\check{u}_{\delta }- u] f \omega_\psi^n \leq \Vert f\Vert_{L^p( \omega_\psi^n )}[\int\limits_K [\check{u}_{\delta }- u] \omega_\psi^n]^{1-\frac 1 p}\leq C\delta^\alpha,
$$
for all $u\in{\PSH}\cap L^\infty (\Omega)$, $K\subset\subset D\subset\subset \Omega$ and local chart $\Omega$. Therefore using Theorem 2.1 there exists a H\"older continuous $\omega$-psh function $\varphi$ such that $\omega_\varphi^n = f \omega_\psi^n$.

Fix $\mu_1=\MA(\phi_1), \mu_2=\MA(\phi_2) \in \MAH(X,\omega)$ and set $\mu=(\mu_1+\mu_2)/2$.
Observe that $\psi:=(\phi_1+\phi_2)/2 \in \PSH(X,\omega) \cap\Holder(X,\omega)$ satisfies
$$
(\omega+dd^c \psi)^n \geq \frac{1}{2^n} (\mu_1+\mu_2)
$$
hence $\mu=f (\omega+dd^c \psi)^n$ with bounded density $0 \leq f \leq 2^{n-1}$.
It therefore follows from the first part of the corollary that $\mu$ also belongs to $\MAH(X,\omega)$, hence
the latter is convex.
\end{proof}

We also note that the range of the complex Monge-Amp\`ere operator has the product property.

\begin{corollary}
Let $(X_1,\omega_1), (X_2,\omega_2)$ be two compact K\"ahler manifolds
of dimension $n_1,n_2$,
normalized so that $\int_{X_1} \omega_1^{n_1}=\int_{X_2} \omega_2^{n_2}=1$.
Fix $\mu_1,\mu_2$ two probability measures on $X_1,X_2$.
The  following are equivalent:
\begin{itemize}
\item[\rm i)] $\mu_1\in \MAH(X_1,\omega_1)$ and $\mu_2\in \MAH(X_2,\omega_2)$.
\smallskip
\item[\rm ii)] $\mu=\mu_1\times\mu_2 \in \MAH (X_1 \times X_2, \omega)$, where
$$
\omega=\left( \begin{array}{c} n_1+n_2 \\ n_1 \end{array} \right)^{-1/(n_1+n_2)} [\omega_1+\omega_2].
$$
\end{itemize}
\end{corollary}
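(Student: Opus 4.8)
The plan is to treat the two implications separately: $\mathrm{(i)}\Rightarrow\mathrm{(ii)}$ is an explicit construction, and $\mathrm{(ii)}\Rightarrow\mathrm{(i)}$ is obtained by transporting condition $\mathrm{(iii)}$ of Theorem~\ref{thm:char} between the factors and the product. Write $n:=n_1+n_2$, $c:=\binom{n}{n_1}^{-1/n}$, and $\pi_i\colon X_1\times X_2\to X_i$ for the projections, so that $\omega=c\,(\pi_1^*\omega_1+\pi_2^*\omega_2)$. In the binomial expansion of $(\pi_1^*\omega_1+\pi_2^*\omega_2)^n$ only the term $\binom{n}{n_1}\,\pi_1^*\omega_1^{n_1}\wedge\pi_2^*\omega_2^{n_2}$ survives, because $(\pi_i^*\omega_i)^{n_i+1}=0$ for dimension reasons; hence $\int_{X_1\times X_2}\omega^n=c^n\binom{n}{n_1}=1$, consistent with $\mu_1\times\mu_2$ being a probability measure.

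For $\mathrm{(i)}\Rightarrow\mathrm{(ii)}$: given $\mu_i=(\omega_i+dd^c\varphi_i)^{n_i}$ with $\varphi_i\in\PSH(X_i,\omega_i)\cap\Holder(X_i,\R)$, set $\varphi:=c\,(\pi_1^*\varphi_1+\pi_2^*\varphi_2)$. This is H\"older continuous and $\omega+dd^c\varphi=c\,[\pi_1^*(\omega_1+dd^c\varphi_1)+\pi_2^*(\omega_2+dd^c\varphi_2)]\ge 0$, so $\varphi\in\PSH(X_1\times X_2,\omega)\cap\Holder$. The same degree argument gives $(\omega+dd^c\varphi)^n=c^n\binom{n}{n_1}\,\pi_1^*(\omega_1+dd^c\varphi_1)^{n_1}\wedge\pi_2^*(\omega_2+dd^c\varphi_2)^{n_2}$, and since Monge--Amp\`ere operators of bounded quasi-psh potentials multiply on a product (check it for smooth potentials and pass to the limit by Bedford--Taylor continuity along decreasing sequences), this equals $c^n\binom{n}{n_1}(\mu_1\times\mu_2)=\mu_1\times\mu_2$. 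Thus $\MA(\varphi)=\mu$, i.e. $\mu\in\MAH(X_1\times X_2,\omega)$.

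For $\mathrm{(ii)}\Rightarrow\mathrm{(i)}$: by symmetry it is enough to show $\mu_1\in\MAH(X_1,\omega_1)$. Since $\mu:=\mu_1\times\mu_2\in\MAH(X_1\times X_2,\omega)$, Theorem~\ref{thm:char} gives that $\mu$ satisfies condition $\mathrm{(iii)}$, and I will deduce that $\mu_1$ satisfies $\mathrm{(iii)}$ on $(X_1,\omega_1)$, whence $\mu_1\in\MAH(X_1,\omega_1)$ by the implication $\mathrm{(iii)}\Rightarrow\mathrm{(i)}$ of that theorem. For the local Jensen estimate, fix once and for all a chart $\Omega_2\subset X_2$ around a point of $\mathrm{supp}\,\mu_2$ and $K_2\subset\subset D_2\subset\subset\Omega_2$ with $\mu_2(K_2)>0$. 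Given $u\in\PSH\cap L^\infty(\Omega_1)$ and $K_1\subset\subset D_1\subset\subset\Omega_1$, apply condition $\mathrm{(iii)}$ for $\mu$ on the product chart $\Omega_1\times\Omega_2$ to $\tilde u:=u\circ\pi_1$, with $K=K_1\times K_2$, $D=D_1\times D_2$. Since $\tilde u$ depends only on the first variable, $\Delta\tilde u=\Delta u\otimes dV$ as measures on $\C^{n_1}\times\C^{n_2}$, so the right-hand side equals $C\,\mathrm{Vol}(D_2)\int_{\overline{D_1}}\Delta u\,\delta^\alpha$; for the left-hand side, averaging $\tilde u$ over $\mathbb{B}_\delta\subset\C^n$ is an average of averages of $u$ over balls in $\C^{n_1}$ of radii $\le\delta$, so the sub-mean-value inequality and the monotonicity of $r\mapsto\check u_r$ give a pointwise bound $\check{\tilde u}_\delta(z_1)-u(z_1)\ge a_0\big(\check u_{\delta\sqrt3/2}(z_1)-u(z_1)\big)$ with an absolute constant $a_0>0$ (depending only on $n_1,n_2$), so the left-hand side is $\ge a_0\,\mu_2(K_2)\int_{K_1}[\check u_{\delta\sqrt3/2}-u]\,d\mu_1$. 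Renaming $\delta':=\delta\sqrt3/2$ yields the Jensen estimate for $\mu_1$, with the same exponent and a constant depending only on $C,\alpha,\mu_2(K_2),n_1,n_2$.

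It remains to transfer $\mathcal H(\infty)$, which is the delicate point. The naive route via capacities fails: the only easy relation — from the competitor $c\,\rho_1$ attached to $\rho_1\in\PSH(X_1,\omega_1)$, $0\le\rho_1\le1$ — is $\Capa_{\omega_1}(K_1)\le\Capa_\omega(K_1\times X_2)$, the inequality \emph{opposite} to the one one would need. Instead I will use the standard reformulation (cf. \cite{K1,EGZ1}) of $\mathcal H(\infty)$ as a uniform polynomial decay of sublevel sets: a non-pluripolar measure $\nu$ on $(Y,\eta)$ is in $\mathcal H(\infty)$ if and only if for every $N$ there is $C_N$ with $\nu(\{u<-t\})\le C_N t^{-N}$ for all $\eta$-psh $u$ with $\sup_Y u=0$ and all $t>0$. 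For an $\omega_1$-psh $u_1$ with $\sup_{X_1}u_1=0$, the pull-back $c\,u_1$ is $\omega$-psh (since $\omega+dd^c(cu_1)=c[\pi_1^*(\omega_1+dd^c u_1)+\pi_2^*\omega_2]\ge0$) with $\sup=0$, and $\{cu_1<-t\}=\{u_1<-t/c\}\times X_2$; as $\mu_2(X_2)=1$ we get $\mu_1(\{u_1<-t/c\})=\mu(\{cu_1<-t\})\le C_N t^{-N}$, so $\mu_1$ inherits the same decay and hence $\mathcal H(\infty)$. (Alternatively, one can show that the relative extremal function of a cylinder $K_1\times X_2$ depends only on $z_1$ — replace any competitor $\rho$ by $(\sup_{z_2}\rho)^*$, still a competitor and $\ge\rho$ — and compute that $\Capa_\omega(K_1\times X_2)$ equals the $\omega_1$-capacity of $K_1$ at level $1/c$, which is comparable to $\Capa_{\omega_1}(K_1)$.) With both parts of $\mathrm{(iii)}$ transferred, Theorem~\ref{thm:char} gives $\mu_1\in\MAH(X_1,\omega_1)$ and, symmetrically, $\mu_2\in\MAH(X_2,\omega_2)$. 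The remaining verifications — the product formula for Monge--Amp\`ere measures, the identity $\Delta\tilde u=\Delta u\otimes dV$, and the sub-mean-value computation — are routine.
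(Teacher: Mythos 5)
Your proof is correct and follows essentially the same route as the paper: the explicit potential $c\,(\pi_1^*\varphi_1+\pi_2^*\varphi_2)$ for i)~$\Rightarrow$~ii), and for ii)~$\Rightarrow$~i) the transfer of condition iii) of Theorem \ref{thm:char} from the product measure to each factor, which is exactly what the paper asserts (without giving details). The details you supply are sound: the fibered Jensen estimate via the kernel comparison on product balls, and the transfer of $\mathcal H(\infty)$ through uniform sublevel-set decay for normalized quasi-psh functions (you rightly note the naive capacity comparison goes the wrong way); the equivalence you quote is the standard consequence of the bound $\Capa_\omega(\{u<-t\})\le C/t$ together with the capacity comparisons of \cite{GZ1}, which the paper itself invokes in Section 5.
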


Here $\mu=\mu_1 \times \mu_2$ denotes the product (probability) measure on $X_1 \times X_2$,
and we still denote by $\omega_1,\omega_2$ the semi-positive forms on $X_1 \times X_2$
obtained by pulling-back $\omega_1,\omega_2$ on each factor.

\begin{proof}
i) $\Rightarrow$ ii) Assume that $\mu_1=(\omega_1+dd^c u_1 )^{n_1}$ and $\mu_2=(\omega_2+dd^c u_2 )^{n_2}$
where $u_1$, $u_2$ are H\"older continuous $\omega_i$-psh functions on $X_1$, $X_2$. Pulling back these
forms and functions on $X=X_1 \times X_2$ and observing that
$(\omega_i+dd^c u_i)^{1+n_i} \equiv 0$, one obtains
$$
\mu=\mu_1\times\mu_2=(\omega+dd^c u )^{n_1+n_2}
\text{ with }
u=\frac{[u_1+u_2]}{\left( \begin{array}{c} n_1+n_2 \\ n_1 \end{array} \right)^{1/(n_1+n_2)}}
$$
so that $\mu \in \MAH(X,\omega)$.

ii) $\Rightarrow$ i) Since $\mu$ satisfies iii) in Theorem \ref{thm:char}
 we infer that $\mu_1,\mu_2$ satisfy the same property. Using Theorem \ref{thm:char} again
 thus yields $\mu_1\in \MAH(X_1,\omega_1)$, $\mu_2\in \MAH (X_2,\omega_2)$.
\end{proof}

\section{Measures with symmetries}\label{symmetries}
Generalizing Skoda's celebrated result \cite{Sk}, Dinh-Nguyen-Sibony have observed recently \cite{DNS}
 that if $\mu$ is the Monge-Amp\`ere measure of a H\"older-continuous quasi-psh function,
 then
 $$
 \exp(-\varepsilon \PSH(X,\omega)) \subset L^1(\mu)
 $$
 for $\varepsilon>0$ small enough.
 We show here that the converse holds when $\mu$ moreover has radial or toric singularities.  The general case is open, see
 however \cite{Hi} for some
partial results.

\subsection{Exponential integrability, Lelong numbers and symmetries- basic results}

Note for later use that if $ \exp(-\varepsilon \PSH(X,\omega)) \subset L^1(\mu)$, then for all $x \in X$ and $0<r<<1$,
$$
\mu(\mathbb B(x,r)) \leq C r^{\varepsilon}
$$
and  $\mu(K) \leq C T(K)^{\varepsilon}$ for all Borel sets $K$, where $T$ denote the Alexander-Taylor capacity (see \cite{GZ1}).
This implies that for all $A>1$, there exists $C_A>0$ such that
$$
\mu(K) \leq C_A \Capa_{\omega}(K)^A, \text{ for all Borel set } K,
$$
where $\Capa_{\omega}$ denotes the Monge-Amp\`ere capacity. In other words, $\mu$ is very well dominated by the Monge-Amp\`ere capacity (it satisfies the condition ${\mathcal H}(\infty)$).

\medskip

 Let  $u$ be a psh function defined near the origin in $\mathbb C^n$, with a
{\it radial singularity}, i.e. such that $u(z)=u(\Vert z\Vert )$ for all $z$. It is then standard that $u$ can be written as
$u(z)=\chi \circ L(z)$ where $L(z)=\log \Vert z\Vert $ and $\chi$ is a convex increasing function defined
in a neighborhood of $-\infty$. Note that

-- the function $u$ is bounded if and only $\chi(-\infty)>-\infty$;

-- the  Lelong number $\nu(u,0)$ is non zero if and only if $\chi(t) \sim \nu(u,0) t$ near $-\infty$,

\noindent which is the maximal growth that $\chi$ can have at $-\infty$. Alternatively, $\nu(u,0)=0$ if and only if $\chi'(-\infty)=0$.
The following elementary computation is left to the reader:

\begin{lemma} \label{lem:calcul}
Let $u=\chi \circ L$ be a radial plurisubharmonic function defined in a ball $\mathbb B \ni 0$. Assume that $\chi$ is $\mathcal C^2$ smooth.
Then $u$ belongs to the domain of definition of the Monge-Amp\`ere operator and
$$
(dd^c u)^n=\nu(u,0)^n \delta_0+ c_n (\chi' \circ L)^{n-1} \chi'' \circ L \frac{dV}{\Vert z\Vert^{2n}}.
$$
\end{lemma}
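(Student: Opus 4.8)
The plan is to compute $(dd^c u)^n$ for $u = \chi \circ L$ with $L(z) = \log\|z\|$ by a direct calculation away from the origin, and then to identify the mass carried by $\{0\}$ separately. First I would work on $\mathbb{B} \setminus \{0\}$, where $u$ is smooth (since $\chi$ is $\mathcal{C}^2$ and $L$ is smooth and pluriharmonic there), and compute the complex Hessian of $u$. Writing $\partial_j L = \frac{\bar z_j}{2\|z\|^2}$ and using $\partial_j \bar\partial_k L = \frac{1}{2\|z\|^2}\left(\delta_{jk} - \frac{\bar z_j z_k}{\|z\|^2}\right)$, the chain rule gives
\begin{equation}\label{hessform}
\partial_j \bar\partial_k u = (\chi'' \circ L)\, \partial_j L\, \bar\partial_k L + (\chi' \circ L)\, \partial_j \bar\partial_k L.
\end{equation}
The matrix $(\partial_j \bar\partial_k u)$ is thus a rank-one perturbation (the $\chi''$ term, proportional to $\bar z \otimes z$) of a multiple of the projection orthogonal to $z$ (the $\chi'$ term). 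Diagonalizing in a frame adapted to the radial direction, the eigenvalue in the radial direction picks up the $\chi''$ contribution while the $n-1$ tangential eigenvalues come only from $\chi'$; taking the determinant and multiplying by the appropriate constant $c_n$ yields
$$
(dd^c u)^n = c_n\, (\chi' \circ L)^{n-1}\, (\chi'' \circ L)\, \frac{dV}{\|z\|^{2n}} \quad \text{on } \mathbb{B} \setminus \{0\}.
$$

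Next I would address the contribution at the origin. The key point is that $u$ belongs to the domain of definition of the Monge-Amp\`ere operator: since $u$ is radial with Lelong number $\nu := \nu(u,0)$, one can compare $u$ with $\nu L$, which is a bona fide psh function, and the difference is bounded near $0$ (or handle the case $\nu = 0$ directly, where $u$ is continuous). The Demailly comparison theorem for Lelong numbers, or equivalently the standard formula $\int_{\{0\}} (dd^c u)^n = \nu(u,0)^n$ for psh functions with an isolated singularity (the Lelong number of the Monge-Amp\`ere measure, see Demailly's work), then shows the atomic part is exactly $\nu(u,0)^n \delta_0$. To make this self-contained one can also argue directly: for $\varepsilon$ small, $\int_{\mathbb{B}(0,\varepsilon)}(dd^c u)^n = \int_{\mathbb{B}(0,\varepsilon)} (dd^c \max(u, \nu L - C))^n$ for a suitable constant, and the latter, being the Monge-Amp\`ere mass of a function asymptotic to $\nu L$, converges to $\nu^n$ as $\varepsilon \to 0$ by the convexity/monotonicity of the Monge-Amp\`ere mass under the condition $\chi(t)/t \to \nu$; equivalently, one uses that $\int_{\mathbb{B}(0,\varepsilon) \setminus \{0\}}(dd^c u)^n = c_n \int (\chi')^{n-1}\chi'' \|z\|^{-2n}\,dV$ which, after the change of variables reducing to a one-dimensional integral over $t \in (-\infty, \log\varepsilon)$, equals $(\chi'(\log\varepsilon))^n - \nu^n$ up to constants, so passing to the limit recovers the atom $\nu^n\delta_0$.

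The main obstacle I expect is the justification that the total current $(dd^c u)^n$ is well-defined and equals the sum of these two pieces with no extra mass concentrated on $\{0\}$ beyond $\nu^n \delta_0$ — that is, that there is no ``cross term'' or hidden singular contribution. This is really a question of showing $u$ lies in the domain of $\MA$ and that the Bedford–Taylor convergence theorems apply; the cleanest route is to invoke Demailly's result that for a psh function with isolated unbounded locus the Monge-Amp\`ere operator is well-defined and the atom at the singularity equals the $n$-th power of the Lelong number, combined with the explicit smooth computation \eqref{hessform} away from $0$ to pin down the absolutely continuous part. Once both parts are identified, adding them gives the stated formula, and the elementary one-variable bookkeeping (which the statement explicitly leaves to the reader) confirms the constant $c_n$ and the consistency of the two contributions.
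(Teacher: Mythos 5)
The paper gives no proof of this lemma (it is explicitly ``left to the reader''), and your argument is precisely the intended elementary computation: the chain-rule evaluation of the complex Hessian on $\mathbb B\setminus\{0\}$, giving the radial eigenvalue $\sim\chi''\circ L$ and the $n-1$ tangential eigenvalues $\sim\chi'\circ L$, together with the isolated-singularity criterion for membership in the domain of $\MA$ and the one-variable bookkeeping $\int_{\mathbb B(0,\varepsilon)\setminus\{0\}}(dd^c u)^n=c\bigl[(\chi'(\log\varepsilon))^n-\nu^n\bigr]$ identifying the atom $\nu(u,0)^n\delta_0$. One minor slip: $u-\nu L$ need not be bounded near $0$ (e.g.\ $\chi(t)=\nu t+\sqrt{-t}$), so the parenthetical comparison with $\nu L$ should not be phrased via boundedness; but this is harmless, since your direct integration argument (or the comparison theorem for Lelong numbers, which needs no boundedness) already pins down the mass at the origin.
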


Here $\delta_0$ denotes the Dirac mass at the origin. Note in particular that when $\nu(u,0)=0$ then
the Monge-Amp\`ere measure $(dd^c u)^n$ is absolutely continuous with respect to Lebesgue measure.

\smallskip

 A similar formula can be derived for Monge-Amp\`ere measures with {\it toric symmetries}, but
 we will not use it: we will handle the toric case by using Theorem \ref{thm:char}, whereas the radial
 case will be treated directly, using Lemma \ref{lem:calcul} (the direct method yields better exponents).

\subsection{The radial case}

We obtain here a complete  description of those radial measures  which belong to $\MAH(X,\omega)$.

\begin{proposition} \label{pro:radialholder}
Let $\mu$ be  a probability measure on $X$ which is smooth but at finitely many points where it has
a radial singularity.  The following are equivalent:

{\rm i)}  $\exp (-\varepsilon \PSH(X,\omega)) \subset L^1(\mu)$ for all $0 < \varepsilon < \varepsilon_0$;

{\rm ii)} $\Vert z-a\Vert^{-\varepsilon} \in L^1(\mu)$ for all $0 < \varepsilon < \varepsilon_0$ and $a \in X$;

{\rm iii)} $\mu(\mathbb B(a,r)) \leq C r^{\varepsilon}$ for all $0 < \varepsilon < \varepsilon_0$ and $a \in X$;

{\rm iv)}  $\mu=(\omega+dd^c \phi)^n$, where $\phi \in \PSH(X,\omega)$ is H\"older continuous with exponent
$\alpha$ arbitrarily close to $\varepsilon_0/n$.
\end{proposition}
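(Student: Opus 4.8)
The plan is to prove Proposition~\ref{pro:radialholder} by a cyclic chain of implications. The equivalence $\mathrm{i)}\Leftrightarrow\mathrm{ii)}\Leftrightarrow\mathrm{iii)}$ is essentially local and soft: near a singular point $a$, the function $\f_a(z):=\varepsilon\log\Vert z-a\Vert$ is $\omega$-psh after modification (being a local model of an $\omega$-psh function with Lelong number $\varepsilon$), so $\mathrm{i)}\Rightarrow\mathrm{ii)}$; the implication $\mathrm{ii)}\Leftrightarrow\mathrm{iii)}$ is the classical layer-cake / distribution-function computation $\int\Vert z-a\Vert^{-\varepsilon}d\mu=\varepsilon\int_0^\infty t^{-\varepsilon-1}\mu(\mathbb B(a,t))\,dt$ up to boundary terms; and $\mathrm{iii)}\Rightarrow\mathrm{i)}$ follows from the remarks recalled at the start of Section~\ref{symmetries} (a ball-growth bound $\mu(\mathbb B(a,r))\le Cr^\varepsilon$ for all $\varepsilon<\varepsilon_0$ forces $\mu$ to be dominated by Monge--Amp\`ere capacity to all orders, hence by Skoda-type integrability $\exp(-\varepsilon\PSH)\subset L^1(\mu)$ for $\varepsilon<\varepsilon_0$). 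Since $\mathrm{iv)}\Rightarrow\mathrm{i)}$ is immediate from the Dinh--Nguyen--Sibony result (a H\"older $\omega$-psh potential yields $\exp(-\varepsilon'\PSH)\subset L^1(\mu)$, and one checks the exponent matches), the real content is the implication $\mathrm{iii)}\Rightarrow\mathrm{iv)}$: producing a H\"older solution with the sharp exponent $\alpha$ close to $\varepsilon_0/n$.

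For $\mathrm{iii)}\Rightarrow\mathrm{iv)}$ I would argue as follows. Away from the finitely many singular points $\mu$ is smooth; the issue is purely local at each $a$. Write the local radial singularity as $\chi\circ L$. The ball-growth condition $\mu(\mathbb B(a,r))\le Cr^\varepsilon$ together with Lemma~\ref{lem:calcul} translates into a bound on $\chi$: since $(dd^c(\chi\circ L))^n=\nu^n\delta_0+c_n(\chi'\circ L)^{n-1}\chi''\circ L\,\Vert z\Vert^{-2n}dV$ and the mass of the absolutely continuous part over $\mathbb B(a,r)$ is (up to constants) $(\chi'(\log r))^n-(\chi'(-\infty))^n$, the hypothesis for all $\varepsilon<\varepsilon_0$ forces $\nu(u,a)=\chi'(-\infty)=0$ and more precisely $\chi'(\log r)^n\le C r^\varepsilon$, i.e.\ $\chi'(t)\le C e^{\varepsilon t/n}$ for $t\to-\infty$, for every $\varepsilon<\varepsilon_0$. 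Integrating, $\chi(t)-\chi(-\infty)\le C' e^{\varepsilon t/n}$, which says that the \emph{candidate} local potential $u=\chi\circ L$ already satisfies $u(z)-u(a)=O(\Vert z-a\Vert^{\varepsilon/n})$ — it is itself H\"older of exponent arbitrarily close to $\varepsilon_0/n$. So locally at each singular point we have a H\"older psh function; patching this with the smooth part of $\mu$ on the complement, one is reduced to solving a global Monge--Amp\`ere equation whose measure is locally the Monge--Amp\`ere mass of a H\"older psh function, which is exactly the situation handled by Theorem~\ref{thm:char} (equivalently, one invokes Proposition~\ref{rem:geneThmA}): the measure lies in $\MAH(X,\omega)$. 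The only remaining point is to track the exponent: the gluing/solving step must not destroy the H\"older exponent $\varepsilon/n$, which one arranges by feeding the explicit local H\"older control into the proof of Theorem~A (or its generalization in Proposition~\ref{rem:geneThmA}), noting that condition $\mathrm{ii)}$ in Proposition~\ref{rem:geneThmA}, namely $\Vert\rho_\delta\f-\f\Vert_{L^1(\mu)}=O(\delta^b)$, can be verified directly from the radial structure with $b$ as close to $\varepsilon_0/n$ as desired.

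An alternative, more self-contained route for $\mathrm{iii)}\Rightarrow\mathrm{iv)}$ that I would keep in reserve: since the radial model $u=\chi\circ L$ is explicit, one can build a global $\omega$-psh function directly by taking, near each singular point $a_i$, a suitably rescaled copy of $\chi_i\circ L$, cutting it off with a logarithmic barrier as in the proof of Theorem~\ref{thm:char} (max with $\Vert z-a_i\Vert^2-c$), and then correcting on the smooth overlap region by solving a Monge--Amp\`ere equation with $L^p$ (indeed bounded, even smooth) density, for which H\"older continuity of the correction is guaranteed by Theorem~A. The resulting potential is then H\"older with exponent $\min\{\varepsilon_0/n,\ 2/(nq+1)\}$; but here $q=\infty$ effectively since the correction density is bounded, so the effective constraint is only $\varepsilon_0/n$, up to $\varepsilon$.

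The main obstacle, as I see it, is the bookkeeping of the H\"older exponent through the non-local solving step: everything else is either soft (the equivalences among $\mathrm{i)}$–$\mathrm{iii)}$) or a direct computation with the convex function $\chi$ (Lemma~\ref{lem:calcul} plus integration). One must make sure that the global operations — the $\max$ with a barrier, the decomposition of $\mu$ into a locally-H\"older-model part plus a bounded-density part, and the stability estimate of \cite{DZ} — each preserve exponent $\varepsilon_0/n-\eta$ for every $\eta>0$, rather than degrading it to $2/(nq+1)$ for the ambient $p$ of $\mu$ (which could be worse). The cleanest way around this is to observe that the absolutely continuous part of $\mu$ is smooth away from the $a_i$, so the only exponent-limiting ingredient is the local radial model, whose exponent we computed to be exactly $\varepsilon_0/n$; feeding precisely this into Proposition~\ref{rem:geneThmA} (whose conclusion yields an exponent depending only on $n$ and the $L^1$-regularization rate $b$) then gives the claim.
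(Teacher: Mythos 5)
Your proposal is correct and follows essentially the paper's own route: the equivalences i)--iii) and the implications iv)~$\Rightarrow$~i), iii) are treated the same way, and the main implication is reduced, exactly as in the paper, to proving that the local radial potential $u=\chi\circ L$ is H\"older of exponent $\varepsilon/n$ by means of Lemma \ref{lem:calcul}, after which Theorem \ref{thm:char} (via Proposition \ref{rem:geneThmA}) yields membership in $\MAH(X,\omega)$. The only deviations are minor: you read the decay $\chi'(t)\leq Ce^{\varepsilon t/n}$ directly off the ball-mass identity $\mu(\mathbb B(a,r))\approx \chi'(\log r)^n$ starting from iii), whereas the paper starts from ii) and gets the same bound by an integration by parts plus a short ODE-type argument (your version is, if anything, a bit cleaner), and your separate claim that iii) implies i) outright is redundant --- the cycle already closes through iv)~$\Rightarrow$~i) by \cite{DNS} --- which is just as well, since a ball-growth bound alone does not obviously imply uniform exponential integrability for a general measure; your explicit concern about preserving the exponent $\varepsilon_0/n$ through the global step is legitimate, and the paper's written proof is no more detailed on this point than yours.
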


\begin{proof}
The implication i)~$\Rightarrow$~ii) is obvious. The equivalence
ii)~$\Leftrightarrow$~iii) is immediate. The implication
iv)~$\Rightarrow$~iii) is classical (successive integration by
parts against a cut-off function with support in a corona of radii
$jr, (j+1)r$) and holds for general (non radial) measures. The
implication iv)~$\Rightarrow$~i) was  obtained in
\cite{DNS}, also for general measures.
 In the sequel we thus focus on the remaining implication
ii)~$\Rightarrow$~iv).

Let $a \in X$ be one of the finitely many singular points. We fix  a local chart near $a$ such that
$a=0$ is the origin and locally $\mu=(dd^c u)^n$ with $u=\chi \circ L$, $L(z)=\log \Vert z\Vert $ and
$\chi$ convex increasing. Observe that  $u$ is bounded and $\chi'(-\infty)=0$.
By Theorem \ref{thm:char} it is enough to check that $u$ is H\"older continuous at point $a$, which is
equivalent to showing that
$$
0 \leq \chi(t)-\chi(-\infty) \leq C \exp( \delta t) \text{ as } t \rightarrow -\infty,
$$
for some positive constants $C,\delta>0$.

By assumption there exists $\varepsilon>0$ such that $\Vert z\Vert^{-\varepsilon} \in L^1(\mu)$. We infer from Lemma \ref{lem:calcul} that
\begin{eqnarray*}
\lefteqn{
\int_0 \frac{1}{\Vert z\Vert^{\varepsilon}} d \mu=c \int_0 (\chi' \circ L)^{n-1} \chi'' \circ L \frac{dV(z)}{\Vert z\Vert^{2n+\varepsilon}}} \\
&=&c'\int_{-\infty} (\chi'(t))^{n-1} \chi''(t) e^{-\varepsilon t} dt  <+\infty.
\end{eqnarray*}

We now integrate by parts, in finite time, to obtain
$$
\varepsilon \int_{-A} (\chi')^n \exp(-\varepsilon t)
dt=(\chi')^n(-A)\exp(+\varepsilon A) +O(1)  . 
$$

We claim that $\int_{-\infty} (\chi')^n \exp(-\varepsilon t) dt$ is finite.
So is the limsup on the right hand side, hence
$\chi'(t) \leq C \exp(\varepsilon t/n)$, which yields
$$
\chi(t)-\chi(-\infty) \leq C' \exp(\varepsilon t/n).
$$
Therefore $u(z)-u(0) \leq C' \Vert z\Vert^{\varepsilon/n}$, i.e. $u$ is H\"older continuous.

\smallskip

It remains to prove the claim. If $\int_{-\infty} (\chi')^n \exp(-\varepsilon t) dt=+\infty$,
then \newline $(\chi')^n(-A)\exp(+\varepsilon A) \rightarrow +\infty$ as $A \rightarrow +\infty$.
Set
$$
h(t)=(\chi')^n(t) \exp(-\varepsilon t) \; \; \text{ and } \; \;  H(x)=\int_x^0 h(t)dt.
$$
Thus $H(x) \rightarrow +\infty$ as $x \rightarrow -\infty$ and
$$
\varepsilon H(x)=H'(x)+O(1)  . 
$$
We let the reader check that this implies
$H(x)=\lambda \exp(-\varepsilon x)+O(1)$ for some constant $\lambda \geq 0$.
Now $\chi'(t) \rightarrow 0$ as $t \rightarrow -\infty$ so $h(t)=o(\exp(-\varepsilon t))$
and $H(t)=o(\exp(-\varepsilon t))$. This forces $\lambda=0$, hence $H(t)=O(1)$.
\end{proof}

\subsection{The toric case}

We now consider the case of probability measures $\mu$ which are smooth but at finitely many points where they have ``toric singularities'' the origin $0 \in \mathbb C^n$ is called a toric singularity for the measure $\mu=(dd^c u)^n$, $u$ psh and bounded, if $u$ is $(S^1)^n$-invariant, i.e.
$$
u(z_1,\ldots,z_n)=u(|z_1|, \ldots,|z_n|),
\; \;
\forall z=(z_1,\ldots,z_n) \in \Delta^n.
$$
We will call these measures  {\it toric measures} for short.

\begin{proposition}
{\sl Let $\mu$ be a toric measure in the unit polydisk $\Delta^n \subset \mathbb C^n$.
Assume that for  all $0<r<\frac 1 2$ and $j=1,...,n$,
$$
\mu (\Delta\times ...\times\Delta_j (r)\times ..\times\Delta)\leq C r^\alpha, \; \text{ where } C,\alpha >0.
$$
Then
$$
\int_{\Delta_n (t)} [\check{u}_{\delta }(z)- u(z)] d\mu \leq C(t) \delta^{\beta},
$$
for all $0<t<1$ and $u\in \PSH\cap L^\infty (\Delta^n)$ with $0\leq u\leq 1$.}
\end{proposition}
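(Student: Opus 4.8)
The plan is to reduce to the case where the test function $u$ is itself $(S^1)^n$-invariant --- which is legitimate since $\mu$ and the domain $\Delta_n(t)$ are torus invariant --- and then to exploit that a bounded $(S^1)^n$-invariant psh function is a bounded \emph{convex} function of the logarithmic variables, hence enjoys an interior modulus of continuity controlled by the distance to the coordinate hyperplanes. This convexity is the decisive extra input: a general bounded psh function admits no such estimate, so the reduction is essential, not cosmetic.

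\emph{Reduction to toric $u$.} For $\theta\in(S^1)^n$ let $\theta\cdot z$ denote the coordinatewise unitary action on $\mathbb C^n$, let $d\theta$ be the normalized Haar measure, and put $\bar u(z):=\int_{(S^1)^n}u(\theta\cdot z)\,d\theta$, which is psh, $(S^1)^n$-invariant and satisfies $0\le\bar u\le1$. Using the identity $\theta\cdot z+w=\theta\cdot(z+\theta^{-1}\cdot w)$, the invariance of $\mathbb B_\delta$ and Lebesgue measure under $w\mapsto\theta^{-1}\cdot w$, and Fubini, one checks that $\int_{(S^1)^n}\check u_\delta(\theta\cdot z)\,d\theta=\check{\bar u}_\delta(z)$. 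Since $\mu$ is $(S^1)^n$-invariant, being the Monge-Amp\`ere measure of an $(S^1)^n$-invariant potential, and $\Delta_n(t)$ is invariant, averaging the integrand over $(S^1)^n$ leaves $\int_{\Delta_n(t)}(\check u_\delta-u)\,d\mu$ unchanged, and the averaged integrand equals $\check{\bar u}_\delta-\bar u$. Thus it suffices to prove the inequality for toric $u$; write then $u(z)=\widetilde u(\log|z_1|,\dots,\log|z_n|)$, where $\widetilde u$ is convex and $0\le\widetilde u\le1$ on $(-\infty,0)^n$.

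\emph{The estimate for toric $u$.} Fix a scale $\eta\in(0,1)$ to be optimized, and split $\Delta_n(t)$ along $\{\min_j|z_j|<\eta\}$. On $\bigcup_{j=1}^n\{|z_j|<\eta\}$ we use $0\le\check u_\delta-u\le1$ together with the hypothesis $\mu(\{|z_j|<\eta\})\le C\eta^\alpha$, so this region contributes at most $nC\eta^\alpha$. On $\{|z_j|\ge\eta\text{ for all }j\}\cap\Delta_n(t)$, assume $\delta\le\eta/2$ and $\delta<1-t$; then for every $w\in\mathbb B_\delta$ one has $\bigl|\log\frac{|z_j+w_j|}{|z_j|}\bigr|\le 2|w_j|/|z_j|\le 2\delta/\eta$ for each $j$, so the logarithmic coordinates of $z+w$ lie within $\ell^1$-distance $2n\delta/\eta$ of $x:=(\log|z_1|,\dots,\log|z_n|)$, and therefore $\check u_\delta(z)-u(z)\le\sup\{\widetilde u(x'):|x'-x|\le 2n\delta/\eta\}-\widetilde u(x)$. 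By convexity of $\widetilde u$ and $0\le\widetilde u\le1$, whenever $|x'-x|$ is smaller than the distance from $x$ to $\partial(-\infty,0)^n$ one has $\widetilde u(x')-\widetilde u(x)\le|x'-x|\,/\,\mathrm{dist}(x,\partial(-\infty,0)^n)$; moreover $z\in\Delta_n(t)$ forces $\mathrm{dist}(x,\partial(-\infty,0)^n)=\min_j\bigl|\log|z_j|\bigr|\ge|\log t|$. Hence, as soon as $2n\delta/\eta<|\log t|$, the second region contributes at most $\frac{2n\delta}{\eta|\log t|}\,\mu(\Delta_n(t))$, with $\mu(\Delta_n(t))<\infty$.

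\emph{Conclusion and main difficulty.} Adding the two bounds gives $\int_{\Delta_n(t)}(\check u_\delta-u)\,d\mu\le C(t)\,\delta/\eta+nC\eta^\alpha$ for all sufficiently small $\delta$ (for $\delta$ not small one just uses $\check u_\delta-u\le1$ and finiteness of $\mu$ on $\Delta_n(t)$, enlarging $C(t)$), and the choice $\eta=\delta^{1/(1+\alpha)}$ yields the claim with $\beta=\alpha/(1+\alpha)$. I expect the only real obstacle to be the second step: the passage from Euclidean to logarithmic coordinates degenerates near the coordinate hyperplanes, which is exactly why the slab hypothesis $\mu(\{|z_j|<r\})\le Cr^\alpha$ is needed, and the trade-off between the $\delta/\eta$ term --- valid only at distance $\ge\eta$ from those hyperplanes --- and the $\eta^\alpha$ mass term is what determines the exponent $\beta$. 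The averaging identity in the first step is routine but indispensable, as it is what endows the problem with the convex structure without which the argument would break down.
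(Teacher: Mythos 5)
Your proposal is correct and follows essentially the same route as the paper: average the test function over the torus (the paper's operator $T$), exploit convexity of the averaged function in logarithmic coordinates, and use the slab hypothesis $\mu(\{|z_j|<r\})\leq Cr^{\alpha}$ to absorb the degeneration near the coordinate hyperplanes. The only difference is bookkeeping --- the paper proves the pointwise bound $T\check{u}_{\delta}-Tu\leq C\sum_j\log\bigl(1+\delta/|z_j|\bigr)$ and splits the resulting one-dimensional integral at $\delta^{1/(\alpha+2)}$, whereas you split the domain at $|z_j|=\eta=\delta^{1/(1+\alpha)}$ --- so both arguments yield a positive exponent ($\beta=\alpha/(\alpha+2)$ there, $\beta=\alpha/(1+\alpha)$ in your version).
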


\begin{proof}
Set
$
Tu(z)=\frac 1 { ( 2\pi )^n } \int_{[0,2\pi]^n}u(e^{i\theta_1}|z_1|,...,e^{i\theta_n}|z_n|) d\theta_1 ... d\theta_n.
$
Note that \newline $Tu(z)=Tu(|z_1|,...,|z_n|)$ is increasing and logarithmically convex. This implies that
$$
 Tu(|z_1|+\delta_1,...,|z_n|+\delta_n)-Tu(|z_1|,...,|z_n|)|\leq C\sum\limits_{j=1}^n \log \left( 1+\frac {\delta_j} {|z_j|}
 \right),
$$
for all $z\in\Delta_n (1/2)$.
It follows from  Fubini theorem that
$$
\aligned  T \check{u}_{\delta }(z)&=\frac 1 { ( 2\pi )^n } \int\limits_{[0,2\pi]^n} \check{u}_{\delta } (e^{i\theta_1}|z_1|,...,e^{i\theta_n}|z_n|) d\theta_1 ... d\theta_n\\
&=\frac 1 { ( 2\pi )^n } \int\limits_{[0,2\pi]^n} \frac 1 { c_n\delta^n }\int\limits_{\mathbb B_\delta} u(e^{i\theta_1}|z_1|+w_1,...,e^{i\theta_n}|z_n|+w_n)\,dV (w) d\theta_1 ... d\theta_n\\
&=\frac 1 { ( 2\pi )^n } \int\limits_{[0,2\pi]^n} \frac 1 { c_n\delta^n }\int\limits_{\mathbb B_\delta(|z_1|,...,|z_n|)} u(e^{i\theta_1}\xi_1,...,e^{i\theta_n}\xi_n)\,dV (\xi) d\theta_1 ... d\theta_n\\
&=\frac 1 { c_n\delta^n }\int\limits_{ \mathbb B_\delta(|z_1|,...,|z_n|) } Tu( \xi )\,dV (\xi)\\
&\leq Tu(|z_1|+\delta,...,|z_n|+\delta).
\endaligned$$

Since $\mu$ is toric,
$$
 \int_{\Delta_n (1/2)} [\check{u}_{\delta }(z)- u(z)] d\mu =
\int_{\Delta_n (1/2)} [T\check{u}_{\delta }(z)- Tu(z)] d\mu ,
$$
thus
$$
\aligned \int_{\Delta_n (1/2)} [\check{u}_{\delta }(z)- u(z)] d\mu &\leq C \sum\limits_{j=1}^n \int_{\Delta_n (1/2)} \log \left( 1+\frac {\delta} {|z_j|} \right) d\mu\\
&\leq n C \int_0^{1/2} \frac {\delta t^{\alpha}}{t^2+\delta t}  dt\\
 &\leq C' \delta^{\beta}
\endaligned
$$
with $\beta=\alpha/(\alpha+2)$, as can be checked by cutting the integral into two pieces $\int_0^{\delta^{\gamma}}+\int_{\delta^\gamma}^{1/2}$,
where $\gamma=1/(\alpha+2)$.
\end{proof}

\begin{corollary}
A toric probability measure $\mu$ belongs to $\MAH(X,\omega)$ if and only
if\break
$\exp(-\varepsilon \PSH(X,\omega)) \subset L^1(\mu)$ for some $\varepsilon>0$.
\end{corollary}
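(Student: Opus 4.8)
The plan is to prove the two implications separately; the forward one is immediate, and the converse carries the content. If $\mu=\MA(\varphi)$ with $\varphi\in\PSH(X,\omega)\cap\Holder(X,\mathbb{R})$, then $\exp(-\varepsilon\,\PSH(X,\omega))\subset L^1(\mu)$ for some $\varepsilon>0$ by the Dinh--Nguyen--Sibony theorem quoted in the introduction, i.e.\ property $(\dag)$ \cite{DNS}. No symmetry of $\mu$ intervenes here, so this direction holds for every $\mu\in\MAH(X,\omega)$.

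For the converse, suppose $\exp(-\varepsilon\,\PSH(X,\omega))\subset L^1(\mu)$ for some $\varepsilon>0$. As recorded at the beginning of this section, this already forces $\mu\in\mathcal H(\infty)$ and $\mu(K)\le C\,T(K)^{\varepsilon}$ for every Borel set $K$, where $T$ is the Alexander--Taylor capacity; in particular the bounded $\omega$-psh solution $u$ of $\MA(u)=\mu$ exists. I would then verify the two hypotheses of Proposition~\ref{rem:geneThmA} for $u$: (i) is the $\mathcal H(\infty)$ property, just obtained, so only (ii), namely $\Vert\rho_\delta u-u\Vert_{L^1(\mu)}=O(\delta^{b})$ for some $b>0$, has to be checked (the same argument applies to any bounded $\omega$-psh function, should the stronger reading of (ii) be intended). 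Fix a finite cover of $X$ by coordinate charts. On a chart containing none of the finitely many singular points of $\mu$, the measure has bounded density with respect to Lebesgue measure, and $\int(\rho_\delta u-u)\,d\mu=O(\delta^2)$ there by the Jensen formula~\eqref{jensen}, exactly as in Lemma~\ref{Jen}. Thus everything localises to a chart around a singular point $a$, in which we pick coordinates making $a=0$ and $\mu$ $(S^1)^n$-invariant.

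On that chart the point that requires work is to verify the slab bounds $\mu(\Delta\times\cdots\times\Delta_j(r)\times\cdots\times\Delta)\le C r^{\alpha}$ asked for by the Proposition immediately preceding this corollary; this is where the integrability hypothesis is used, through the capacity estimate rather than through symmetry. A coordinate slab $\{|z_j|\le r\}$ has Alexander--Taylor capacity $O(r)$ --- it is governed by the one-variable disc, as one sees from the monomials $z_j^{k}$ --- whence $\mu(\{|z_j|\le r\}\cap\Delta^n)\le C r^{\varepsilon}$. The Proposition then yields $\int_{\Delta^n(t)}[\check{u}_\delta-u]\,d\mu\le C(t)\,\delta^{\beta}$ for every $w\in\PSH\cap L^\infty(\Delta^n)$ with $0\le w\le 1$. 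I would apply this to the affine normalisation of the bounded psh function $u+\psi_\omega$, where $\psi_\omega$ is a smooth local Kähler potential, absorb the $O(\delta^2)$ error coming from $\psi_\omega$, and then bridge from the Euclidean averages $\check{u}_\delta$ to the Riemannian regularisation $\rho_\delta$: both are $\delta$-scale averages of $u$ against non-negative kernels which, by~\eqref{taylor expansion} and the kernel estimates of Lemma~\ref{Jen}, agree up to comparable constants and an $O(\delta^2)$ curvature term, so $\Vert\rho_\delta u-u\Vert_{L^1(\mu)}\le C\Vert\check{u}_{C\delta}-u\Vert_{L^1(\mu)}+O(\delta^2)$. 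Summing over the finite cover gives (ii) with $b=\min(\beta,2)$, hence Proposition~\ref{rem:geneThmA} makes $u$ Hölder continuous, i.e.\ $\mu\in\MAH(X,\omega)$. (Alternatively one can route the argument through condition (iii) of Theorem~\ref{thm:char}, at the price of an extra normalisation --- via the Riesz decomposition of a bounded psh function on the chart --- converting the ``$0\le w\le1$'' bound into the $\int_{\bar D}\Delta u$-weighted one.)

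The step I expect to cost the most care is not the conceptual reduction to the toric Proposition, but the surrounding analysis: obtaining the slab capacity bound in a form valid for charts on $X$, and, above all, the routine-but-technical comparison between the Riemannian $\rho_\delta$ and the Euclidean averages $\check{u}_\delta$, which is precisely what lets the chart-by-chart estimates be assembled into the single global bound feeding Proposition~\ref{rem:geneThmA}.
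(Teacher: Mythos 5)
Your proof is correct and follows essentially the same route as the paper: the forward implication via \cite{DNS}, and the converse by deducing ${\mathcal H}(\infty)$ and the slab bounds from the exponential integrability (via the capacity estimate), feeding these into the toric Proposition, and concluding through the characterization machinery --- the paper cites item (iii) of Theorem \ref{thm:char}, whose implication iii)~$\Rightarrow$~i) is exactly the appeal to Proposition \ref{rem:geneThmA} that you make directly. Your extra care with the slab capacity bound and the comparison between the Euclidean averages $\check{u}_\delta$ and the Riemannian regularization $\rho_\delta u$ fills in steps the paper leaves implicit, but it is the same argument.
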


\begin{proof}
If $\mu$ belongs to $\MAH(X,\omega)$, then $\exp(-\varepsilon \PSH(X,\omega)) \subset L^1(\mu)$ for some $\varepsilon>0$,
as follows from \cite{DNS}.
Assume now that $\exp(-\varepsilon \PSH(X,\omega)) \subset L^1(\mu)$ for some $\varepsilon>0$.
As explained earlier, this implies that $\mu$ is very well dominated by the Monge-Amp\`ere capacity, in particular
$\mu \in {\mathcal H}(\infty)$. The previous proposition shows that item (iii) of Theorem \ref{thm:char}
applies, hence $\mu \in \MAH(X,\omega)$.
\end{proof}

In view of the above proofs, one may wonder whether all probability measures satisfying condition ${\mathcal H}(\infty)$
belong to $\MAH(X,\omega)$. The following example shows this is far from being the case.

\begin{example}
We assume here $(X,\omega)=(\mathbb P^1,\omega_{FS})$ is the Riemann sphere equipped with the Fubini-Study form.
We let $\phi \in \PSH(X,\omega)$ be a function that is smooth in $\mathbb P^1$ but at one point which we choose as the origin
$0$ in some affine chart $\mathbb C$ and so that
$$
\phi(z)=\exp\left(-\sqrt{-\log |z|} \right)-\frac{1}{2} \log [1+|z|^2]
$$
near the origin. The reader will easily check, following the arguments in Example 4.2 in \cite{BGZ}, that
$\mu=\omega+dd^c \phi$ is very well dominated by the logarithmic capacity , in particular satisfies ${\mathcal H}(\infty)$,
although $\phi$ is not H\"older continuous.
\end{example}

\section{The case of big cohomology classes}\label{big}

\begin{proof}[Proof of Theorem D]
In order to deal with the general case of big cohomology classes, we use again  the regularization techniques of the first author, coupled now with Proposition \ref{proEGZ}.

\smallskip

We let $\f$ be a $\theta$-psh function solution of $(\theta+dd^c \f)^n=\mu$,
where the density  $f \geq 0$ of $\mu$ with respect to a smooth volume form belongs to $L^p$ for some $p>1$. The solution is unique
up to an additive constant, it is $\theta$-psh with minimal singularities (see \cite{BEGZ}). We
can thus assume, without loss of generality, that $-C_0 + V_{\theta}  \leq \f \leq  V_{\theta}$. We let
$$
\f \mapsto \rho_\d \f
$$
again denote the regularization operator defined  in (2.1). As in the K\"ahler case
$t \mapsto \rho_t \f+K t^2$ is increasing for $0 \leq t \leq \d_0$ and some constant $K$.

We consider the Kiselman-Legendre transform,
$$
\p_{c,\d}(z):=\inf_{t \in{}]0,\d]} \left\{ \rho_t \f(z)+K t^2 -c \log (t/\d) \right\},
$$
where $0 \leq \d \leq \d_0$ and $c>0$ will be carefully chosen below. Observe that
$$
\f \leq \p_{c,\d} \leq \rho_\d \f + K\d^2.
$$
The fundamental curvature estimate is now
$$
\theta + dd^c \p_{c,\d} \geq -(Ac+K\d^2) \omega
$$
for some constant $A>0$.
Since the coholomogy class $\Theta=\{\theta\}$ is big, there exists a $\theta$-psh function $\p_0$ on $X$ such
 that $\theta + dd^c \p_0 \geq \e_0 \omega$, for some small constant $\e_0 > 0$. Subtracting a large constant, we can always assume that $\p_0 \leq 0$ hence $\p_0 \leq V_{\theta}$.

It follows that the function
$$
\f_{c,\d} := \frac{A c + K \d^ 2}{\e_0} \p_0 + \left(1 - \frac{A c
+ K \d^ 2}{\e_0}\right) \p_{c,\d} 
$$
 is $\theta$-plurisubharmonic on $X$. Fix $0 < \d < \d_0$ and choose $c > 0$ such that
$$
Ac+K\d^2= \e_0  \d^{\alpha}, \ \ \text{where} \ \alpha := 2\gamma,
$$
and observe that $c= \e_0 A^{-1}\d^{\alpha} - K A^{-1} \d^2 =O(\d^{\alpha})$.
In the sequel we set
$$
\f_\d := \f_{c,\d}.
$$
Since $\p_0 \leq V_{\theta} \leq \f + C_0$, we see from the definition that on the ample locus,
\begin{eqnarray*}
\f_\d - \f & = & \d^{\alpha} (\p_0 - \f) + (1 - \d^{\alpha}) (\p_{c,\d} - \f) \\
&\leq & C_0\d^\alpha + (1 - \d^{\alpha}) (\rho_{\d} \f - \f + K \d^2).
\end{eqnarray*}
Furthermore, since $\f\leq V_\theta\leq 0$, we get $\varrho_\d\f\leq 0$, thus $\psi_{c,\d} \leq K\d^2\leq C_0\d^\alpha$ if $\d\le\d_0$ small enough, and so $\varphi_{\d}\le C_0\d^\alpha$. This implies $\psi:=\f_\d-C_0\d^\alpha \leq V_\theta$. By Proposition \ref{proEGZ}, it follows that
\begin{eqnarray*}
\sup_X (\f_\d-\f) & \leq & B_0 \Vert  \max(\f_\d-\f-C_0 \d^\alpha,0)\Vert_{L^1(X)}^\gamma + C_0\d^\alpha \\
& \leq & B_0 \Vert  \rho_\d \f+K\d^2-\f\Vert_{L^1(X)}^\gamma + C_0\d^\alpha
\end{eqnarray*}
for some constant $B_0>0$ which depends only on $\gamma$ and the uniform norm of $\f - V_{\theta}$.

Applying Lemma 2.3,  the last estimate yields
$$
\sup_X (\f_\d-\f) \leq C_1 \d^{\alpha},
$$
where $C_1 := B_0 C_\omega + K^\gamma + C_0$ and $C_\omega$ is the constant in Lemma 2.3.

This inequality $\f_\d \leq \f+C_1 \d^{\alpha}$  yields a uniform lower bound on the parameter
$t=t(z)$ which realizes the infimum in the definition of $\f_\d(z)$ for a fixed $z \in \Omega$. Namely the last inequality gives
\begin{eqnarray*}
\f_{\d} (z) - \f (z)& = &\d^{\alpha} (\p_0 (z) {-} \f (z)) + \left(1 {-} \d^{\alpha}\right) (\rho_t \f (z){+} K t^2 {-} \f (z) {-} c \log (t\slash \d)) \\
&\leq& C_1 \d^{\alpha}.
\end{eqnarray*}
Since  $ V_{\theta} - \f \geq 0$ and $\rho_t \f (z) + K t^2 - \f (z) \geq 0$, it follows that
$$
c (1 - \d^{\alpha}) \log [ t(z)/\d] \geq \d^{\alpha} (\p_0 (z) - V_{\theta} (z) - C_1).
$$
Since $c= \e_0 A^{-1} \d^{\alpha} - K A^{-1} \d^2$, the choice $\d \leq \d_1 := \min \{\d_0, (\e_0\slash 2 K)^{1 \slash (2-\alpha)}\}$ yields
$c\geq \frac{1}{2}\e_0 A^{-1} \d^{\alpha}$ and therefore
$$
t(z) \geq  \d \kappa (z),
$$
where
\begin{equation}
\label{eq:kappa}
 \kappa (z) :=  \exp\left( C_2 (\p_0 (z) - V_{\theta} (z) - C_1\right),
\end{equation}
\begin{equation} \label{eq:C2}
 C_2 := \frac{2 A}{\e_0 (1-\d_0^{\alpha})}.
\end{equation}

We are now in position to conclude.
Fix $z \in Amp(\Theta)$. Since $ t(z) \geq \kappa (z) \d$ and $t\mapsto
\rho_t\f+Kt^2$ is increasing,  we get
\begin{eqnarray*}
\rho_{\kappa (z) \d} \f(z) -\f(z) & \leq & \rho_{t(z)}  \f(z)+K t(z)^2-\f(z) \\
& = & \p_{c,\d} (z) - \f (z) = \frac{1}{1-\d^\alpha} (\f_\d (z) - \d^\alpha\psi_0(z)),
\end{eqnarray*}
and by the above and the assumption $\varphi\leq V_\theta\leq 0$ we find
\begin{eqnarray*}
\f_\d - \d^\alpha\psi_0 & \leq & \f + C_1\d^\alpha - \d^\alpha\psi_0 \leq
 C_1\d^\alpha + \d^\alpha (V_\theta - \psi_0),\\
\rho_{\kappa (z) \d} \f(z) -\f(z)
& \leq & (1- \d_0^{\alpha})^{-1})  \d^{\alpha} (C_1 + V_\theta(z) - \p_0(z)).
\end{eqnarray*}

Replacing $\d$ by $\kappa (z) ^{-1}\d$ and using (\ref{eq:kappa}), we obtain for $\d \leq \d_0 \kappa (z)$,
\begin{eqnarray}
\label{eq:Halpha}
\rho_{ \d} \f(z) -\f(z)& \leq &(1- \d_0^{\alpha})^{-1}\d^{\alpha}
(C_1 + V_\theta(z) - \p_0(z)) \cdot
\exp \left(\alpha C_2 (C_1 + V_\theta(z) - \p_0(z))\right)\nonumber  \\
&\leq & C_3 \exp \left(2\alpha C_2 (C_1 + V_\theta(z) -
\p_0(z))\right), 
\end{eqnarray}
where
\begin{equation}
\label{eq:C3}
C_3 := (\alpha C_2)^{-1} (1 - \d_0^{\alpha})^{-1}.
\end{equation}
 This finishes the proof of Theorem D, since $\p_0 (z) - V_{\theta} (z)$ is locally bounded from below on $Amp({\Theta})$ as well as $\kappa(z)$ given by (\ref{eq:kappa}).
 \end{proof}

\section*{Appendix}
We briefly explain below how bounds on the curvature may be used to control the differential of the exponential mapping. This is essentially a variation on the theme of Jacobi vector fields.

\subsection*{Estimates for the differential of the exponential}
For accurate computations with the exponential we need to control its differential in terms of the curvature. To this end we determine the Jacobi equations which  calculate the variation of geodesics.

Let namely $u\rightarrow u+v$ be a small perturbation of the geodesic $t\rightarrow u(t)$ with initial velocity $\zeta$. Its linearization satisfies
\begin{equation}\label{linearization}
 \frac{d^2v_m}{dt^2}=\sum_{j,k,l}R_{j\bar{k}l\bar{m}}\bar{v}_k\frac{du_j}{dt}\frac{du_l}{dt}+O(|u(t)|).
\end{equation}
Moreover if $\connection$ denotes the Levi-Civita connection with respect to $\omega$ then along the geodesic $u(t)$ one can compute
\begin{equation}\label{nabla}
\Big(\frac{\connection\zeta}{dt}\Big)_m=\frac{d\zeta_m}{dt}-\sum_{j,k,l}R_{j\bar{k}l\bar{m}}\frac{du_j}{dt}\zeta_l+O(|u(t)|^2)\zeta
\end{equation}
\begin{equation}\label{2nabla}
\Big(\frac{\connection^2\zeta(t)}{dt^2}\Big)_m=\frac{d^2\zeta_m}{dt^2}-\sum_{j,k,l}R_{j\bar{k}l\bar{m}}\frac{d\bar{u}_k}{dt}\frac{du_j}{dt}
\zeta_l+O(|u(t)|)\zeta.
\end{equation}
Let us now put $\zeta=v$. Then the Jacobi equation takes the intrinsic form
\begin{equation*}
\Big(\frac{\connection^2v(t)}{dt^2}\Big)_m=\sum_{j,k,l}R_{j\bar{k}l\bar{m}}\bar{v}_k\frac{du_j}{dt}\frac{du_l}{dt}
-\sum_{j,k,l}R_{j\bar{k}l\bar{m}}\frac{d\bar{u}_k}{dt}\frac{du_j}{dt}v_l.
\end{equation*}

In particular the formula holds at $\zeta:=u'(0)$.
Thus if the curvature is bounded by the constant $R_0^2$ (the square being taken for the ease of notation), then
$$|(\connection_{v(t)}\connection_{v(t)})|\leq2R_0^2|\zeta|^2|v|.$$
This is a vector analogue of the scalar equation $y''=2py$. By Gronwall's lemma the solution to the corresponding Cauchy problem with data $v(0)=v_0,\ Dv(0)=v_1$ is estimated by
$$|v(t)|\leq |v_0|\cosh(\sqrt{2}R_0|\zeta|t)+\frac{|v_1|}{\sqrt{2}R_0|\zeta|}\sinh(\sqrt{2}R_0|\zeta|t).$$

Let us denote by $\tau_{z,\zeta}(t):T_ZX\rightarrow T_{\exp_z(t\zeta)}X$ the parallel translation along the geodesic. Let also $\tilde{v}(t):=\tau_{z,\zeta}(t)^{-1}v(t)\in T_zX$. Then $\tilde{v}$ satisfies the analogous equation with curvature transported back to $T_zX$. Thus
\begin{equation}\label{taueq}
 |\tilde{v}(t)-v_0-v_1t|\leq
 |v_0|\cosh(\sqrt{2}R_0|\zeta|t)+\frac{|v_1|}{\sqrt{2}R_0|\zeta|}\sinh(\sqrt{2}R_0|\zeta|t)-|v_0|-|v_1|t.
\end{equation}

The differential of the ordinary exponential mapping evaluated at $(h,\eta)\in T(TX)_{(z,\xi)}\simeq T_zX\otimes T_zX$ is precisely $v(1)$ for the solution of the Cauchy problem $v(0)=h, \connection v(0)=\eta$. Thus (\ref{taueq}) gives us the bound

$$|\tau_{z,\zeta}(1)^{-1}d\exp_z(\zeta)(h,\eta)-(h+\eta)|\leq  h\cosh(\sqrt{2}R_0|\zeta|)+\frac{\eta}{\sqrt{2}R_0|\zeta|}\sinh(\sqrt{2}R_0|\zeta|)-h-\eta.$$

If $|\zeta|$ is small ($|\zeta|\leq\frac{\varepsilon}{2R_0}$, say), then elementary Taylor expansion gives us the bound

$$|\tau_{z,\zeta}(1)^{-1}d\exp_z(\zeta)(h,\eta)-(h+\eta)|\leq (1+O(\varepsilon))(c_1\varepsilon^2|h|+c_2\varepsilon^2|\eta|).$$

Thus there exists some uniform $\varepsilon_0$ such that in the
balls $|\zeta|\leq\frac{\varepsilon}{\sqrt{2}R_0}$ for any
$\varepsilon\leq\varepsilon_0$ the differential is a
diffeomorphism and is even $O(\varepsilon^2)$ close to the
identity.

\begin{Remark}
 Similar estimates can be obtained in the Hermitian case either, geodesics being defined by the Chern connection rather than the
 Levi-Civita connection. One then has to assume additionally a uniform bound on $|\partial\omega|_{\omega}$ and
 $|\connection(\partial\omega)|_{\omega}$ to accommodate the presence of torsion. However, replacing $\exp$ by $\exph$ as was done in \cite{De94} and \cite{BD}
would be a challenge, because we would then need an ``effective'' version
of E.~Borel's theorem to show that $\exph$ can be chosen to satisfy the same
estimates as~$\exp$, and this is certainly non trivial.
\end{Remark}

\noindent {\bf Acknowledgements}. This project was initiated during the BIRS
conference ''Complex Monge-Amp\`ere equation'' held in
2009. The authors would like to acknowledge the perfect working
conditions provided by the organizers.   Part of the
research was done while the second and fifth authors were visiting
the Erwin Schr\"odinger institute in Vienna in 2009. They would
like to express their gratitude for the hospitality. The second
author was supported by Polish ministerial grant ``Iuventus Plus''
and Kuratowski fellowship granted by the Polish Mathematical
Society (PTM) and Polish Academy of Science (PAN). The research
was done while the fourth author was a postdoctoral research
fellow at Centro Internazionale per la Ricerca Matematica, Trento,
Italy. He would like to thank the members of the Institute for
their kind hospitality.  The second and fifth authors were
partially supported by NCN grant 2011/01/B/ST1/00879.

\bigskip

\noindent {Jean-Pierre Demailly: Institut Universitaire de France et Universit\'e Grenoble I, 100 rue des Maths
38402 Saint-Martin d'H\`eres,  France; 

\noindent e-mail: {\tt demailly@fourier.ujf-grenoble.fr}} \\ \\
{S\l awomir Dinew: Rutgers University, Newark, NJ 07102, USA;\\  Jagiellonian University 30-348 Krakow, Lojasiewicza 6, Poland;\\ e-mail: {\tt slawomir.dinew@im.uj.edu.pl}}\\ \\
{Vincent Guedj: Institut Universitaire de France et Institut de Math\'ematiques de Toulouse, Universit\'e Paul Sabatier, 31602 Toulouse Cedex 09, France; \\
e-mail: {\tt vincent.guedj@math.univ-toulouse.fr}}\\ \\
{Pham Hoang Hiep: Hanoi National University of Education, Tuliem-Hanoi-Vietnam; \\
e-mail: {\tt phhiep\_vn@yahoo.com}}\\ \\
{S\l awomir Ko\l odziej:  Jagiellonian University 30-348 Krakow, Lojasiewicza 6, Poland; \\
e-mail: {\tt slawomir.kolodziej@im.uj.edu.pl}}\\ \\
{Ahmed Zeriahi:  Institut de Math\'ematiques de Toulouse,  Universit\'e Paul Sabatier, 31602 Toulouse Cedex 09, France; 
\\ e-mail: {\tt ahmed.zeriahi@math.univ-toulouse.fr}}


\begin{thebibliography}{BCHM}
\bibitem[BCHM]{BCHM} C. Birkar, P. Cascini, C. Hacon and J. McKernan, \it
Existence of minimal models for varieties of log general type, \rm J. Amer. Math. Soc.  {\bf 23}  (2010),  no. 2, 405-468.
\bibitem[BEGZ]{BEGZ} S. Boucksom, P. Eyssidieux, V. Guedj and A. Zeriahi, \it Monge-Amp{\`e}re equations in big cohomology
  classes, \rm  Acta Math. {\bf 205} (2010), 199-262.


\bibitem[BGZ]{BGZ} S. Benelkourchi, V. Guedj and A. Zeriahi, \it  A priori estimates for weak solutions of complex Monge-Amp\`ere equations,  \rm
Ann. Sc. Norm. Super. Pisa Cl. Sci. (5)  {\bf 7}  (2008),  no. 1, 81-96.

\bibitem [BT1] {BT1}{E. Bedford and B. A. Taylor, \it
  The Dirichlet problem for the complex Monge-Amp\`ere operator, \rm Invent. Math. \bf 37 \rm (1976), 1-44.}

\bibitem [BT2]{BT2}{E. Bedford and B. A. Taylor, \it  A new
   capacity for plurisubharmonic functions, \rm
   Acta Math. \bf 149 \rm (1982), 1-40.}
\bibitem[Ber]{Ber} R. Berman, \it A thermodynamical formalism for Monge-Amp\`ere equations, Moser-Trudniger inequalities and K\"ahler-Einstein metrics, \rm Preprint arXiv:1011.3976v1.
\bibitem[BD]{BD} {R. Berman and J.-P. Demailly, \it Regularity of plurisubharmonic upper envelopes in big cohomology classes, \rm preprint arXiv 0905.1246v1, to appear in the Proceedings
of the volume ''Geometry and Topology'' in honor of Oleg Y. Viro, edited by B.
Juhl-J\"oricke and M. Passare, Birkha\"user.}

\bibitem [D1]{De82} {J.-P. Demailly, \it Estimations $L^2$ pour l'op\'erateur $\overline\partial$ d'un fibr\'e vectoriel holomorphe semi-positif au-dessus
d'une vari\'et\'e k\"ahl\'erienne compl\`ete, \rm Ann.\ Sci.\ \'Ecole Norm.\
Sup.\ 4e S\'er. \bf 15 \rm (1982), 457-511.}
\bibitem [D2]{De92} {J.-P. Demailly, \it Regularization of closed
     positive currents and intersection theory, \rm J. Alg. Geom. \bf 1
     \rm  (1992), 361-409.}
\bibitem[D3]{De93} J.-P. Demailly, \it Monge-Amp\`ere operators, Lelong
numbers and intersection theory, \rm Complex analysis and geometry,
115-193, Univ. Ser. Math., Plenum, New York (1993).

\bibitem [D4]{De94}{J.-P. Demailly, \it Regularization of closed positive currents of type (1, 1) by the flow of a Chern
    connection, \rm  Aspects of Math. Vol. E26, Vieweg (1994), 105-126.}

\bibitem [DP]{DP}{J.-P. Demailly and N. Pali, \it Degenerate complex Monge-Amp\`ere equations over compact K\"ahler manifolds, \rm Int. J. Math. \bf 21 \rm (2010), 357-405.}
\bibitem[DNS]{DNS} T. C. Dinh, V. A. Nguyen and N. Sibony, \it Exponential estimates for plurisubharmonic functions and stochastic dynamics, \rm Journal Diff. Geom. {\bf 84} (2010), 465-488.
\bibitem[DS]{DS} T. C. Dinh and N. Sibony, \it Distribution des valeurs de transformations meromorphes et applications, \rm Comment. Math. Helv. {\bf 81} (2006), 221-258.

\bibitem[Di]{Di1} {S. Dinew, \it H\"older continuous potentials on manifolds with partially positive curvature, \rm J. Inst. Math. Jussieu {\bf 9} (2010), 705-718.}

\bibitem [DZ]{DZ}{ S. Dinew and Z. Zhang, \it On stability and
continuity of bounded solutions of degenerate complex Monge-Amp\`ere
equations over compact K\"ahler manifolds, \rm Adv. Math. {\bf 225} (2010), 367-388.}

\bibitem [EGZ1]{EGZ1}{ P. Eyssidieux, V. Guedj and A. Zeriahi, \it
    Singular K\"ahler-Einstein metrics, \rm J. Amer. Math. Soc. {\bf 22} (2009), 607-639.}


\bibitem [EGZ2]{EGZ2} {P. Eyssidieux, V. Guedj and A. Zeriahi, \it A priori $L^{\infty }$ -estimates for degenerate complex
    Monge-Amp\`ere equations, \rm  Int. Math. Res. Not. (2008), ID rnn070.}

\bibitem [EGZ3]{EGZ11}{ P. Eyssidieux, V. Guedj and A. Zeriahi}, {\it Viscosity solutions to degenerate complex Monge-Amp\`ere equations}, \rm Comm. Pure Appl. Math. {\bf 64} (2011), 1059-1094.
\bibitem[GT]{GT} {D. Gilbarg and N. Trudinger, \it Elliptic partial differential equations of second order, \rm Grundl. der Math. Wiss. Springer Verlag, {\bf 244} (1983).}

\bibitem [GKZ]{GKZ}{V. Guedj, S. Ko\l odziej and A. Zeriahi, \it
     H\"older continuous solutions to the complex  Monge-Amp\`ere equations equations, \rm
     Bull. London Math. Soc. \bf40 \rm (2008), 1070-1080.}


\bibitem[GZ1]{GZ1} {V. Guedj and A. Zeriahi, \it Intrinsic capacities on compact K\"ahler manifolds, \rm J. Geom. Anal. {\bf15} (2005), no.4, 607-639.}

\bibitem[GZ2]{GZ11} V. Guedj and A. Zeriahi, {\it Stability of solutions to Monge-Amp{\`e}re equations in big cohomology classes}, preprint 2011.

\bibitem[Hi]{Hi} P. H. Hiep, \it H\"older continuity of solutions to the complex
 Monge-Amp\`ere equations on compact K\"ahler manifolds, \rm  Ann. Inst. Fourier {\bf 60} (2010), 1857-1869.

\bibitem[HK]{HK} E.\ Heintze and H.\ Karcher, {\it A general comparison theorem with applications to volume estimates for submanifolds}, Ann.\ Sci.\ \'Ec.\
Norm.\ Sup.\ {\bf 11} (1978) 451-470




\bibitem[Ki1]{Ki1} {C. O. Kiselman, \it Partial Legendre transformation for plurisubharmonic functions, \rm Invent. Math. \bf49 \rm (1978), 137-148. }

\bibitem[Ki2]{Ki2} {C. O. Kiselman, \it Attenuating the singularities of plurisubharmonic functions, \rm Ann. Polon. Math. \bf60 \rm (1994), 173-197. }

\bibitem[K1]{K1}{S. Ko\l odziej, \it The complex Monge-Amp\`ere
    equation, \rm Acta Math. \bf 180 \rm (1998), 69-117.}

\bibitem[K2]{K3}{S. Ko\l odziej, \it
  The Monge-Amp\`ere equation on compact K\"ahler manifolds, \rm Indiana Univ. Math. J. {\bf52} (2003), no.
3, 667-686. }

 \bibitem[K3]{K2}{S. Ko\l odziej, \it H\"older continuity of solutions to  the complex Monge-Amp\`ere
equation with the right hand side  in $L^p$. The case of compact K\"ahler manifolds, \rm Math. Ann. \bf 342 \rm (2008), 379-386.}

\bibitem[KT]{KT}{S. Ko\l odziej and G. Tian, \it A uniform $L^{\infty}$ estimate for complex Monge-Amp\`ere equations, \rm Math. Ann. \bf 342 \rm (2008), 773-787.}



\bibitem[Pl]{Pl} S. Pli\'s, \it A counterexample to the regularity of the degenerate complex Monge-Amp\`ere equation, \rm Ann. Polon. Math. \bf86 \rm (2005), 171-175.
\bibitem[Sk]{Sk} H. Skoda, \it  Sous-ensembles analytiques d'ordre fini ou infini dans $\mathbb C^n$, \rm
  Bull. Soc. Math. France  {\bf 100}  (1972), 353-408.

\bibitem[ST1]{ST1} {J. Song and G. Tian,  \it The K\"ahler-Ricci
  flow on surfaces of positive Kodaira dimension, \rm Invent. Math.
  \bf 170 \rm (2007), 609-653.}

\bibitem[ST2]{ST2} {J. Song and G. Tian, \it Canonical measures and K\"ahler-Ricci flow, \rm preprint arXiv 0802.2570v1.}

\bibitem[ST3]{ST3} {J. Song and G. Tian, \it The K\"ahler-Ricci flow through singularities, \rm preprint arXiv 0909.4898v1.}

\bibitem[SW]{SW} {J. Song and B. Weinkove, \it Contracting exceptional divisors by the K\"ahler-Ricci flow, \rm Preprint arXiv:1003.0718v1.}
\bibitem [TZ]{TZ} {G. Tian and Z. Zhang,  \it On the
  K\"ahler-Ricci flow on projective manifolds of general type, \rm
  Chinese Ann. Math. B \bf 27 (2) \rm (2006), \rm 179-192.}

\bibitem[To]{To} {V. Tosatti, \it Limits of Calabi-Yau metrics when the K\"ahler class degenerates, \rm J. Europ. Math. Soc, \bf 11\rm (2009), 755-776.}
\bibitem[Ze]{Ze} A. Zeriahi, \it Volume and capacity of sublevel sets of a Lelong class of plurisubharmonic functions, \rm  Indiana Univ. Math. J.  {\bf 50}  (2001),  no. 1, 671-703.

\bibitem[Z]{Z} {Z. Zhang, \it On degenerate Monge-Amp\`ere equations over closed K\"ahler manifolds, \rm Int. Math. Res. Not. {\bf 11} \rm(2006) ID 63640, 18p.}
\end{thebibliography}
\end{document}